\definecolor{shadecolor}{gray}{0.875}
\newtheorem{thrm}{Theorem}[section]
\newtheorem{lem}[thrm]{Lemma}
\newtheorem{cor}[thrm]{Corollary}
\newtheorem{prop}[thrm]{Proposition}
\newtheorem{conj}[thrm]{Conjecture}
\theoremstyle{definition}
\newtheorem{rmk}[thrm]{Remark}
\newtheorem{ques}[thrm]{Question}
\DeclareMathOperator{\Eff}{\overline{Eff}}
\DeclareMathOperator{\Mov}{Mov}
\DeclareMathOperator{\vol}{vol}
\DeclareMathOperator{\codim}{codim}
\DeclareMathOperator{\PSH}{PSH}
\title{Positivity in convergence of the inverse $\sigma_{n-1}$-flow}
\author{Jian Xiao}
\date{}
\begin{document}

\begin{abstract}
We study positivity in the conjecture proposed by Lejmi and Sz\'{e}kelyhidi on finding effective necessary and sufficient
conditions for solvability of the inverse $\sigma_k$ equation, or equivalently, for convergence of the inverse $\sigma_k$-flow. In particular, for the inverse $\sigma_{n-1}$-flow we partially verify their conjecture by obtaining the desired positivity for $(n-1, n-1)$ cohomology classes. As an application, we also partially verify their conjecture for 3-folds.
\end{abstract}

\maketitle

\tableofcontents

\section{Introduction}
By relating the existence of special K\"ahler metrics with algebro-geometric stability conditions, such as the Yau-Tian-Donaldson conjecture on the existence of constant scalar curvature K\"ahler metric, Lejmi and Sz\'{e}kelyhidi \cite{lejmisze15Jflow} proposed a conjectural numerical criterion for solvability of the inverse $\sigma_k$ equation, or equivalently, for convergence of the inverse $\sigma_k$-flow. Inspired by \cite{collins2014convergence}, Lejmi-Sz\'{e}kelyhidi's conjecture (see \cite[Conjecture 18]{lejmisze15Jflow}) can be somewhat generalized by modifying the numerical condition on $X$ a little bit.

\begin{conj}
\label{conj LS15 sigma k flow}
Let $X$ be a compact K\"ahler manifold of dimension $n$, and fix a positive integer $k$ satisfying $1\leq k<n$.
Let $\omega,\alpha$ be two K\"ahler metrics over $X$ satisfying
\begin{align}
\label{eq over X}
\int_X \omega^n-\frac{n!}{k!(n-k)!}\omega^{n-k}\wedge \alpha^k \geq 0.
\end{align}
Then there exists a K\"ahler metric $\omega' \in \{\omega\}$
such that
\begin{align}
\label{eq n-1 positive}
{\omega'} ^{n-1}- \frac{(n-1)!}{k!(n-k-1)!}{\omega'} ^{n-k-1}\wedge \alpha^k >0
\end{align}
as a smooth $(n-1,n-1)$-form if and only if
\begin{align}
\label{eq numerical positive}
\int_V \omega^p - \frac{p!}{k!(p-k)!} \omega^{p-k}\wedge \alpha^k >0
\end{align}
for every irreducible subvariety of dimension $p$ with
$k\leq p \leq n-1$.
\end{conj}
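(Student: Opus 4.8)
\medskip

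\noindent The two implications are of a completely different character, so I treat them separately. \emph{Necessity} is a pointwise linear-algebra estimate carried out on resolutions of the test subvarieties. Let $\omega'=\omega+i\partial\bar\partial\varphi\in\{\omega\}$ satisfy \eqref{eq n-1 positive}, let $V\subset X$ be irreducible of dimension $p$ with $k\le p\le n-1$, and fix a resolution $\nu\colon\widetilde V\to V$. Since $\nu^*\omega'-\nu^*\omega=i\partial\bar\partial(\varphi\circ\nu)$ is exact, the mixed intersection numbers on $\widetilde V$ are unchanged when $\omega$ is replaced by $\omega'$, so
\[
\int_V\omega^p-\binom{p}{k}\omega^{p-k}\wedge\alpha^k=\int_{\widetilde V}\nu^*\!\Bigl({\omega'}^{p}-\binom{p}{k}{\omega'}^{p-k}\wedge\alpha^k\Bigr).
\]
At a point $y\in\widetilde V$ the semipositive forms $\nu^*\omega'$ and $\nu^*\alpha$ share $\ker d\nu_y$, hence are simultaneously diagonalizable; writing $\lambda_1\ge\cdots\ge\lambda_p\ge 0$ for the generalized eigenvalues of $\nu^*\alpha$ relative to $\nu^*\omega'$, the pulled-back integrand at $y$ equals $\bigl(1-\sigma_k(\lambda_1,\dots,\lambda_p)\bigr)\,\nu^*{\omega'}^{p}$ (and where $d\nu_y$ degenerates, $\nu^*{\omega'}^{p}$ already vanishes). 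Over a dense open subset of the smooth locus of $V$, $d\nu_y$ is injective and the $\lambda_i$ are the generalized eigenvalues of $\alpha|_{T_xV}$ relative to $\omega'|_{T_xV}$, so the min--max characterization gives $\lambda_i\le\mu_i$, where $\mu_1\ge\cdots\ge\mu_n$ are those of $\alpha$ relative to $\omega'$ on $T_xX$. Now \eqref{eq n-1 positive} at $x$ is exactly the condition $\sigma_k(\mu_1,\dots,\widehat{\mu_j},\dots,\mu_n)<1$ for every $j$ (the standard diagonalization of a positive $(n-1,n-1)$-form); taking $j=n$ and using $p\le n-1$ with the monotonicity of $\sigma_k$ in its nonnegative arguments gives
\[
\sigma_k(\lambda_1,\dots,\lambda_p)\le\sigma_k(\mu_1,\dots,\mu_p)\le\sigma_k(\mu_1,\dots,\mu_{n-1})<1 ,
\]
so the integrand is $\ge 0$ everywhere and $>0$ on a dense open set, which is \eqref{eq numerical positive}.

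\emph{Sufficiency} is the substantial direction, and I would split it into an analytic reduction and a cohomological positivity statement. For the analytic reduction: by the theory of the inverse $\sigma_k$ flow (Lejmi--Sz\'ekelyhidi \cite{lejmisze15Jflow}, building on Collins \cite{collins2014convergence}), producing $\omega'\in\{\omega\}$ with \eqref{eq n-1 positive} is governed by the cohomological counterpart of \eqref{eq n-1 positive}, namely that the $(n-1,n-1)$-class
\[
\Omega_k:=\{\omega\}^{n-1}-\binom{n-1}{k}\{\omega\}^{n-k-1}\cdot\{\alpha\}^k
\]
lies in the open cone $\mathcal K_{n-1}$ of classes carrying a smooth strictly positive $(n-1,n-1)$-representative, together with the analogous positivity of the correspondingly-defined classes on the test subvarieties (these control the a priori estimates along those subvarieties). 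For $k=n-1$ one has $\Omega_{n-1}=\{\omega\}^{n-1}-\{\alpha\}^{n-1}$; and on a threefold every class occurring in \eqref{eq n-1 positive}--\eqref{eq numerical positive} is of type $(n-1,n-1)$ or $(1,1)$, the latter subsumed by the former via the necessity computation above. These are exactly the cases in which I expect the next step to work, hence the restriction to the $\sigma_{n-1}$-flow and to $3$-folds.

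For the cohomological statement --- which I expect to be the main obstacle --- it remains to show that \eqref{eq numerical positive} (for all $V$ with $k\le\dim V\le n-1$) forces $\Omega_k\in\mathcal K_{n-1}$. This is a Nakai--Moishezon/Demailly--P\u{a}un-type criterion for the special ``difference of complete-intersection'' classes $\Omega_k$, which I would attack by duality: first one shows that, for such classes, membership in $\mathcal K_{n-1}$ is equivalent to pairing strictly positively against every nonzero closed positive $(1,1)$-current while satisfying \eqref{eq numerical positive} along subvarieties; then, to check the current-pairing condition for $\Omega_k$, one decomposes the current via Boucksom's divisorial Zariski decomposition, handles the divisorial part directly with the $\dim V=n-1$ instances of \eqref{eq numerical positive}, and controls the diffuse part with Khovanskii--Teissier and reverse Khovanskii--Teissier inequalities for mixed volumes of $\{\omega\}$ and $\{\alpha\}$ --- these are what upgrade ``positive on every prime divisor'' to an interior point of $\mathcal K_{n-1}$. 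The lower-dimensional conditions are then absorbed by induction on $\dim X$, applied to resolutions of the $V$. The two genuinely hard points I anticipate are: (i) establishing this criterion, since the nef and pseudoeffective cones of $(n-1,n-1)$-classes are only partially understood and the reduction to divisorial currents is delicate; and (ii) making the analytic reduction unconditional for $k<n-1$ --- which is why, for now, only the $(n-1,n-1)$ case (the $\sigma_{n-1}$-flow, and threefolds) is fully within reach, as reflected in the ``partial'' verification.
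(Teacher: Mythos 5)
First, be aware that the statement you are proving is stated in the paper as a \emph{conjecture}: the paper does not prove it, and only establishes the weaker Conjecture \ref{weakconj} (positivity of the cohomology class, not of a power of a metric in the fixed K\"ahler class $\{\omega\}$) for $k=n-1$ and, via a reduction, for $3$-folds. Your \emph{necessity} argument is essentially correct and standard: the interlacing $\lambda_i\le\mu_i$, the identification of \eqref{eq n-1 positive} with $\sigma_k(\mu_1,\dots,\widehat{\mu_j},\dots,\mu_n)<1$ for all $j$, and the monotonicity of $\sigma_k$ do give \eqref{eq numerical positive}; the paper simply cites \cite{lejmisze15Jflow} for this direction.

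The genuine gap is in your ``analytic reduction.'' You assert that producing $\omega'\in\{\omega\}$ with \eqref{eq n-1 positive} ``is governed by'' the cohomological condition that $\Omega_k$ lies in the cone of classes with a smooth strictly positive $(n-1,n-1)$-representative. No such reduction is known, and it is exactly the difference between Conjecture \ref{conj LS15 sigma k flow} and Conjecture \ref{weakconj}: what is proved in \cite{flmsigmakeq, songweinkJflow} is that solvability of \eqref{eq sigmak} is equivalent to the existence of $\omega'\in\{\omega\}$ with \eqref{eq n-1 positive}, i.e.\ the representative must be the $(n-1)$st power of a K\"ahler metric in the \emph{fixed} class $\{\omega\}$, whereas the cohomological positivity only produces a Gauduchon/balanced-type representative of the $(n-1,n-1)$-class (see the remark after Theorem \ref{main result}). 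Closing that gap is the open analytic core of the conjecture, so your argument cannot conclude. Your ``cohomological statement,'' by contrast, does match the paper's actual strategy for $k=n-1$ (divisorial Zariski decomposition to handle the divisorial part via \eqref{eq 2}, Morse-type bigness and reverse Khovanskii--Teissier inequalities for the movable part, then cone duality $\Eff^1(X)^*=\Mov_1(X)$ and Lamari/Toma's Hahn--Banach argument for the smooth positive representative); but for general $k$ this step is also open (cf.\ \cite[Question 3.1]{xiao16sigmarmk}), and your treatment of $3$-folds is not how the paper proceeds: there one uses the identity $\omega^2-2\omega\cdot\alpha=(\omega-\alpha)^2-\alpha^2$ together with the prior result that $\omega-\alpha$ is K\"ahler, then applies the $k=n-1$ theorem to the pair $(\omega-\alpha,\alpha)$, rather than any induction on dimension or absorption of the $(1,1)$-conditions into the $(n-1,n-1)$-ones.
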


\subsection{Some backgrounds}
Recall the definition of the inverse $\sigma_k$ equation with respect to two K\"ahler metrics $\omega, \alpha$: we want to find a K\"ahler metric $\omega_\varphi:=\omega + i\partial\bar \partial \varphi \in \{\omega\}$ such that
\begin{equation}\label{eq sigmak}
  \omega_\varphi^n = \frac{n!}{k!(n-k)!}\omega_\varphi^{n-k}\wedge \alpha^k.
\end{equation}
If the above equation can be solved, then $\omega, \alpha$ must satisfy the following numerical equality:
\begin{align}
\int_X \omega^n-\frac{n!}{k!(n-k)!}\omega^{n-k}\wedge \alpha^k = 0.
\end{align}
It has been already noted in \cite{lejmisze15Jflow} that the pointwise positivity of (\ref{eq n-1 positive}), or the solvability of the inverse $\sigma_k$ equation (\ref{eq sigmak}), implies the numerical condition (\ref{eq numerical positive}). Moreover, it is proved in \cite{flmsigmakeq} that the solvability of the inverse $\sigma_k$ equation (\ref{eq sigmak}) is equivalent to the existence of a K\"ahler metric $\omega' \in \{\omega\}$ satisfying (\ref{eq n-1 positive}) -- analogous to the result first proved in  \cite{songweinkJflow} for $k=1$.

More interestingly, by studying a modification of K-stability -- $J$-stability (or more general stability conditions for the inverse $\sigma_k$ equations), and by considering a special class of test configurations arising from deformation to the normal cone of a subvariety, Lejmi and Sz\'{e}kelyhidi got the numerical condition (\ref{eq numerical positive}). Actually, the the numerical condition (\ref{eq numerical positive}) corresponds to the positivity of a modification of Donaldson-Futaki invariant. Thus, similar to the Yau-Tian-Donaldson conjecture, it is natural to ask the statement in Conjecture \ref{conj LS15 sigma k flow}.

For the applications of the inverse $\sigma_k$ equation in K\"ahler geometry (in particular, in the problem on the existence of constant scalar curvature K\"ahler metrics), we refer the readers to \cite{lejmisze15Jflow, collins2014convergence} and the references therein.

\subsection{Previous results} The key (and difficult) part in Conjecture \ref{conj LS15 sigma k flow} is to get the pointwise positivity from the global numerical positivity conditions, this is analogous to the result of Demailly-P\u{a}un \cite{DP04} giving a numerical characterization of the K\"ahler cone.

By studying non linear PDEs, besides other results, the paper \cite{collins2014convergence} confirmed the conjecture for $k=1$ when $X$ is a toric manifold. 

\begin{rmk}
In the paper \cite{collins2014convergence}, the authors studied the following more general equation
\begin{equation*}
  \omega_\varphi^n + c \alpha^n= \frac{n!}{k!(n-k)!}\omega_\varphi^{n-k}\wedge \alpha^k,
\end{equation*}
where $c$ is a topological constant. The advantage of this more general equation is that the hypotheses in Conjecture \ref{conj LS15 sigma k flow} are amenable
to an inductive argument.
\end{rmk}

For arbitrary compact K\"ahler manifolds, in our previous work \cite{xiao16sigmarmk} we mainly obtained the following two results:
\begin{enumerate}
  \item In the case $k=1$ (or the inverse $\sigma_1$ equation), the class $\{\omega - \alpha\}$ must be a K\"ahler class;
  \item In the case $k=n-1$ (or the inverse $\sigma_{n-1}$ equation), the class $\{\omega^{n-1} - \alpha^{n-1}\}$ must be in the dual of the pseudo-effective cone $\Eff^1 (X)$.
\end{enumerate}
In that paper, we also discussed the positivity of $\{\omega^{k} - \alpha^{k}\}$ for general $k$. However, due to the lack of understanding for the singularities of positive $(k,k)$ currents, it seems not easy to prove similar positivity for the class $\{\omega^{k} - \alpha^{k}\}$ (see \cite[Question 3.1]{xiao16sigmarmk}).

\subsection{Main results} 
For the most general situation of Conjecture \ref{conj LS15 sigma k flow} in higher dimensional case, as pointed out in \cite{lejmisze15Jflow}, it may be necessary to refine the conjecture allowing for more general test-configurations.
On the other hand, Conjecture \ref{conj LS15 sigma k flow} would imply the following weaker conjecture:
\begin{conj}[\textbf{weak conjecture}]\label{weakconj}
Under the assumptions of Conjecture \ref{conj LS15 sigma k flow}, the class $$\{{\omega} ^{n-1}- \frac{(n-1)!}{k!(n-k-1)!}{\omega} ^{n-k-1}\wedge \alpha^k\}$$ contains a smooth strictly positive $(n-1, n-1)$ form.
\end{conj}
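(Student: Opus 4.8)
The plan is to convert the analytic assertion ``the class contains a smooth strictly positive $(n-1,n-1)$-form'' into a purely cohomological positivity statement by a Hahn--Banach separation in the space of $(n-1,n-1)$-forms. Write the class in question as $\Omega=\{\omega^{n-1}-c_{n,k}\,\omega^{n-k-1}\wedge\alpha^k\}$ with $c_{n,k}=\frac{(n-1)!}{k!(n-k-1)!}$. The cone $\mathcal C$ of smooth strictly positive $(n-1,n-1)$-forms is open and convex, and the closed affine subspace of smooth closed representatives of $\Omega$ either meets $\mathcal C$ or is separated from it by a current $T$. Such a separating $T$ is nonnegative on all positive $(n-1,n-1)$-forms, hence is a positive $(1,1)$-current; boundedness along the affine directions forces $T$ to be $d$-closed; and on a compact K\"ahler manifold a nonzero closed positive $(1,1)$-current has nonzero cohomology class. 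This yields the duality I would use throughout: $\Omega$ contains a smooth strictly positive $(n-1,n-1)$-form if and only if $\int_X\Omega\wedge\beta>0$ for every nonzero pseudoeffective class $\beta\in\Eff^1(X)$. The forward direction is immediate, since pairing a strictly positive form with a nonzero closed positive current produces a strictly positive measure. Thus the weak conjecture is reduced to one scalar inequality over the cone $\Eff^1(X)$, uniformly in $k$.

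I would first verify this inequality on the divisorial part of $\Eff^1(X)$. For a prime divisor $D$ of dimension $n-1$, restriction gives
\[
\int_X\Omega\wedge[D]=\int_D\Big(\omega^{n-1}-c_{n,k}\,\omega^{n-k-1}\wedge\alpha^k\Big),
\]
which is exactly the $p=n-1$ instance of the numerical condition (\ref{eq numerical positive}), hence strictly positive; note that this uses only the top-dimensional subvariety condition, for every value of $k$. Comparing binomial coefficients, the global condition (\ref{eq over X}) (whose subtracted coefficient $\tfrac{n!}{k!(n-k)!}$ exceeds $c_{n,k}$) likewise forces $\int_X\Omega\wedge\{\omega\}>0$, so $\Omega$ is strictly positive on the interior class $\{\omega\}$. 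By linearity the required inequality then holds on every finite positive combination of prime divisor classes and of $\{\omega\}$.

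The main obstacle is to propagate strict positivity to the entire pseudoeffective cone, where two difficulties arise: diffuse (non-divisorial) classes, and strictness on the boundary. A general nonzero $\beta\in\Eff^1(X)$ admits a divisorial Zariski (Boucksom) decomposition $\beta=\sum_j\nu_j[D_j]+Z(\beta)$; the negative part is controlled by the divisorial step, but the positive part $Z(\beta)$ is modified nef and carries a closed positive $(1,1)$-current with vanishing generic Lelong numbers in codimension one, on which the subvariety inequalities (\ref{eq numerical positive}) give no direct grip. The plan is to regularize $Z(\beta)$ by Demailly approximation, obtaining smooth forms that are positive up to an arbitrarily small negative multiple of $\omega$ (controlled by the small Lelong numbers), and to absorb the loss using the interior positivity $\int_X\Omega\wedge\{\omega\}>0$; alternatively, to realize modified nef classes as limits of pushforwards of nef classes under a tower of modifications and to pull the numerical conditions back along the blow-ups. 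The step I expect to be hardest is carrying out either limiting procedure while preserving the \emph{strict} inequality, since naive passage to the limit only delivers $\geq 0$, i.e. a positive current rather than a smooth positive form.

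Finally, I would record why this program is presently completable only for $\sigma_{n-1}$ (and, via the favorable low-dimensional structure of $\Eff^1$, partially on threefolds) and not for all $k$. When $k=n-1$ the class collapses to $\{\omega^{n-1}-\alpha^{n-1}\}$, the only active subvariety condition is the divisorial one, and the subtracted term is the pure power $\alpha^{n-1}$, which admits a pointwise linear-algebra comparison against $\omega^{n-1}$ that controls even the diffuse contribution in the estimate above. For general $k$ the mixed term $\omega^{n-k-1}\wedge\alpha^k$ is not a power and offers no such comparison, so the diffuse-current estimate of the previous paragraph is exactly the gap that keeps the general case conjectural; supplying it is what a full proof of the weak conjecture requires.
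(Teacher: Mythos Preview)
Your overall framework---Hahn--Banach reduction to showing $\Omega\cdot\beta>0$ for every nonzero $\beta\in\Eff^1(X)$, followed by divisorial Zariski decomposition $\beta=P(\beta)+N(\beta)$ with the negative part handled by the $p=n-1$ numerical condition---matches the paper's strategy for the only case it actually proves, namely $k=n-1$ (Theorem~\ref{main result}); the statement remains conjectural for general $k$, as you correctly note. One caveat on the duality step: separating in Bott--Chern cohomology produces a positive $(1,1)$-current that is a priori only $\partial\bar\partial$-closed, not $d$-closed, so without further input you obtain a representative only up to $\partial\psi+\overline{\partial\psi}$ (Gauduchon positivity); upgrading to a $d$-closed strictly positive form requires the cone duality $\Eff^1(X)^*=\Mov_1(X)$, presently known only for projective $X$ (Lemma~\ref{thm conedual pointwisepos}).

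The genuine gap is your treatment of the movable part $P(\beta)$. Neither Demailly regularization with error absorption against $\{\omega\}$ nor an unspecified ``pointwise linear-algebra comparison'' is what makes the $k=n-1$ case work, and it is not clear either can be made to. Movable classes are limits of pushforwards $\mu_*\widehat{\omega}$ from modifications, and pointwise estimates on $X$ do not control intersection numbers computed upstairs. The paper's mechanism is specific: write $(\omega^{n-1}-\alpha^{n-1})\cdot\mu_*\widehat\omega = \bigl((\mu^*\omega)^{n-1}-(\mu^*\alpha)^{n-1}\bigr)\cdot\widehat\omega$ and invoke a Morse-type bigness criterion for $(n-1,n-1)$ classes (Lemma~\ref{morsecurve}): since $(\mu^*\omega)^n - n\,\mu^*\omega\cdot(\mu^*\alpha)^{n-1}>0$, the class $(\mu^*\omega)^{n-1}-(\mu^*\alpha)^{n-1}$ contains a strictly positive current on $\widehat X$, hence pairs positively with the K\"ahler class $\widehat\omega$. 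Passing to the limit gives $(\omega^{n-1}-\alpha^{n-1})\cdot P(\beta)\geq 0$. To recover \emph{strict} positivity in the boundary case $\omega^n=n\,\omega\cdot\alpha^{n-1}$---precisely the obstacle you flagged---the paper does not absorb errors into $\{\omega\}$; it perturbs $\omega\mapsto\omega-\varepsilon t_1 H$, $\alpha\mapsto\alpha-\varepsilon t_2 H$ and uses a restricted reverse Khovanskii--Teissier inequality (Lemma~\ref{thm resKT}) to choose $t_1,t_2$ so that the perturbed Morse condition becomes strict while the perturbed class pairs strictly less with $P(\beta)$. These two ingredients are absent from your proposal and are exactly what close the argument for $k=n-1$.
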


In this paper, we mainly focus on the inverse $\sigma_{n-1}$ equation, and verify the weak Conjecture \ref{weakconj} in this case (see Theorem \ref{main result}). As an immediate consequence, applying \cite{xiao16sigmarmk} yields a solution to Conjecture \ref{weakconj} for K\"ahler 3-folds (see Corollary \ref{cor}).

In the algebraic geometry setting, one often needs to consider movable curve classes rather than complete intersection classes. By using the refined structure of movable cone of curves, we give a sufficient condition such that the difference of two movable curve classes is in the interior of the movable cone (see Theorem \ref{main result1}), which
may be useful in the study of stability conditions of vector bundles with respect to movable classes.

\subsection{Ingredients in the proofs}
The proofs of Conjecture \ref{weakconj} for $k=n-1$ and its extension to movable $(n-1, n-1)$ classes depend on the following ingredients:
\begin{itemize}
  \item Divisorial Zariski decomposition for pseudo-effective $(1,1)$ classes, which is given by \cite{Bou04} (see also \cite{Nak04} in the algebraic setting);
  \item Morse type bigness criterion for movable $(n-1, n-1)$ classes, which is noted in
      \cite{xiao2014movable} or
      \cite[Section 4]{lehmann2016convexity}
      (see also \cite[Remark 3.1]{xiao13weakmorse});
  \item Restricted version of ``reverse Khovanskii-Teissier inequalities''\footnote
      {See \cite[Remark 4.18]{lehmann2016convexity} for this terminology.},
      which follows from
      \cite{popovici2016sufficientbig, popovici15} (or \cite{nystrom2016duality} for projective manifolds).
  \item Some properties of positive products, which is proved in \cite[Lemma 6.21]{fl14}.
  \item The refined structure of the movable cone $\Mov_1 (X)$, which follows from \cite{lehmann2016positiivty}.
\end{itemize}
Furthermore, in order to obtain the stronger pointwise positivity when $X$ is a projective manifold, we also need
\begin{itemize}
  \item The duality of (transcendental) cones $\Eff^1 (X)^* =\Mov_1 (X)$, which is proved in \cite{nystrom2016duality}.
\end{itemize}
\subsubsection{Sketch of the proofs} We give the sketch for the proof of Conjecture \ref{weakconj} for $k=n-1$ (some steps in its extension to movable classes are similar). By Morse type bigness criterion for movable $(n-1, n-1)$ classes, over every birational model we conclude that there exists a positive $(n-1, n-1)$ current in the pull-back class of $\{\omega^{n-1}-\alpha^{n-1}\}$.
Applying divisorial Zariski decomposition for $(1,1)$ classes and the numerical condition for prime divisors, we prove that $\{\omega^{n-1}-\alpha^{n-1}\}$ must be a movable $(n-1, n-1)$ class (at least when $X$ is projective), or equivalently, $\{\omega^{n-1}-\alpha^{n-1}\} \in \Eff^1 (X)^*$.
By some kind of restricted version of ``reverse Khovanskii-Teissier inequalities'', we improve the positivity and prove that $\{\omega^{n-1}-\alpha^{n-1}\}$ must be an interior point of $\Eff^1 (X)^*$.
At last, we apply the duality of cones to obtain the desired pointwise strict positivity, which in turn follows from the geometric form of Hahn-Banach theorem
(see e.g \cite{lamarilemma, tomamobcurve}).

\subsection{Organization of the paper}
In Section \ref{secprelim}, we briefly introduce some concepts for positivity and present the key ingredients that are needed in the proof. Section \ref{secproof} devotes to the proof of our main results. In Section \ref{secdisc}, we present some discussions on a general version of restricted reverse Khovanskii-Teissier inequalities.

\subsection*{Acknowledgement}
We would like to thank B.~Lehmann for his support of this work, and thank T.~Collins, G.~Sz\'{e}kelyhidi, V.~Tosatti and B.~Weinkove for their interests and correspondences. In particular, we would like to thank P.~Eyssidieux and M.~Lejmi for drawing our attention to this interesting problem when M.~Lejmi gave a seminar talk at Institut Fourier in March of 2015.

\section{Preliminaries}\label{secprelim}
\subsection{Positivity}
Let $X$ be a compact K\"ahler manifold of dimension $n$. We will let $H^{1,1}(X, \mathbb{R})$ denote the real Bott-Chern cohomology group of bidegree $(1,1)$. A Bott-Chern $(1,1)$ class is called nef if it lies on the closure of the K\"ahler cone; and it is called pseudo-effective if it contains a $d$-closed positive current.  A pseudo-effective $(1,1)$ class $\beta$ is called movable if for any irreducible divisor $Y$ the Lelong number  $\nu(\beta,y)$ (or minimal multiplicity as in \cite{Bou04}) vanishes at a very general point $y \in Y$, or equivalently, $\beta$ is called movable if for any $\varepsilon>0$, there exists a modification $\mu: \widehat{X} \rightarrow X$ and a K\"ahler class $\widehat{\omega}$ on $\widehat{X}$ such that
\begin{equation*}
  \mu_*\widehat{\omega} = \beta + \varepsilon \omega,
\end{equation*}
where $\omega$ is any fixed K\"ahler class. We will be interested in the following cones in $H^{1,1}(X,\mathbb{R})$:
\begin{itemize}
\item $\Eff^{1}(X)$: the cone of pseudo-effective $(1,1)$-classes.
\item $\Mov^{1}(X)$: the cone of movable $(1,1)$-classes.
\end{itemize}
The interior point of $\Eff^{1}(X)$ is called big class. Let $H^{n-1,n-1}(X, \mathbb{R})$ denote the real Bott-Chern cohomology group of bidegree $(n-1,n-1)$. We will be interested in the following cones in $H^{n-1,n-1}(X,\mathbb{R})$:
\begin{itemize}
\item $\Eff_1 (X)$: the cone of pseudo-effective $(n-1,n-1)$-classes;
\item $\Mov_{1}(X)$: the cone of movable $(n-1,n-1)$-classes.
\end{itemize}
Recall that an $(n-1,n-1)$-class is called pseudo-effective if it contains a positive $(n-1, n-1)$ current, and $\Mov_1 (X)$ is the closed cone generated by classes of the form $\mu_* (\widetilde{A}_1 \cdot ... \cdot \widetilde{A}_{n-1})$, where $\mu$ is a modification and the $\widetilde{A}_i$ are K\"ahler classes upstairs. An irreducible curve $C$ on a projective variety is called movable if it is a member of an algebraic family that covers the variety.

\subsubsection{Positive products}
Let $X$ be a compact K\"ahler manifold of dimension $n$. Assume that $L_1, ..., L_r$ are big $(1,1)$ classes, that is, every class $L_i$ contains a K\"ahler current. By the theory developed in \cite{BEGZ10MAbig} (see also \cite{BDPP13}, \cite{BFJ09}), we can associate to $L_1, ..., L_r$ a positive class in $H^{r,r} (X, \mathbb{R})$, denoted by $\langle L_1 \cdot ...\cdot L_r\rangle$. It is defined as the class of the non-pluripolar product of positive current with minimal singularities, that is,
\begin{equation*}
  \langle L_1 \cdot ...\cdot L_r\rangle := \{\langle T_{1, \min} \wedge ...\wedge T_{r, \min}\rangle\}
\end{equation*}
where $\langle T_{1, \min} \wedge ...\wedge T_{r, \min}\rangle$ is the non-pluripolar product. Note that such a current $T_{i, \min} \in L_i$ always exists: let $\theta \in L_i$ be a smooth $(1,1)$ form and let
$$V_{\theta} := \sup \{\varphi \in \PSH(X, \theta)|\ \varphi \leq 0\},$$
then $\theta+ dd^c V_{\theta}$ is a positive current with minimal singularities. There may be many positive currents with minimal singularities in a class, but it is proved in \cite{BEGZ10MAbig} that the positive product $\langle L_1 \cdot ...\cdot L_r\rangle$ does not depend on the choices. For non big pseudo-effective classes, their positive products are defined by taking limits for big ones.
Moreover, if the $L_i$ are nef, then $\langle L_1 \cdot ...\cdot L_r\rangle=L_1 \cdot ...\cdot L_r$ is the usual intersection.

For positive products, applying the result of \cite{BFJ09} (and \cite{nystrom2016duality} for the transcendental situation), the following result was proved in \cite{fl14}.

\begin{lem}\label{pullback}
Let $X$ be a projective manifold of dimension $n$, and let $\alpha\in H^{1,1} (X, \mathbb{R})$ be a pseudo-effective class. Then for any modification $\mu:\widehat{X}\rightarrow X$ we have
\begin{equation*}
  \mu^*\langle \alpha^{n-1}\rangle
  = \langle(\mu^* \alpha)^{n-1}\rangle.
\end{equation*}
\end{lem}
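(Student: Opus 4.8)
The plan is to reduce the statement to the case where $\alpha$ is big, and then to prove equality of the two $(n-1,n-1)$-classes by pairing them against a spanning set of $H^{1,1}(\widehat X,\mathbb R)$. For the reduction, fix a K\"ahler class $\omega$ on $X$. By the definition of positive products for non-big classes, $\langle\alpha^{n-1}\rangle=\lim_{\varepsilon\to0^+}\langle(\alpha+\varepsilon\omega)^{n-1}\rangle$, and since the classes $\mu^*\alpha+\varepsilon\,\mu^*\omega=\mu^*(\alpha+\varepsilon\omega)$ are big and decrease to $\mu^*\alpha$ as $\varepsilon\to0$, continuity of positive products gives $\langle(\mu^*\alpha)^{n-1}\rangle=\lim_{\varepsilon\to0^+}\langle(\mu^*\alpha+\varepsilon\,\mu^*\omega)^{n-1}\rangle$. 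As $\mu^*$ is linear and continuous on cohomology, it therefore suffices to treat the case where $\alpha$, and hence also $\mu^*\alpha$, is big (recall $\vol(\mu^*\alpha)=\vol(\alpha)>0$).

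Assume now $\alpha$ is big. Let $E_1,\dots,E_m$ be the $\mu$-exceptional prime divisors; then $H^{1,1}(\widehat X,\mathbb R)$ is spanned by $\mu^*H^{1,1}(X,\mathbb R)$ together with the classes $\{E_1\},\dots,\{E_m\}$, and the Poincar\'e pairing on $\widehat X$ is perfect, so it is enough to show that $\mu^*\langle\alpha^{n-1}\rangle$ and $\langle(\mu^*\alpha)^{n-1}\rangle$ have the same pairing with each class $\mu^*\gamma$ ($\gamma\in H^{1,1}(X,\mathbb R)$) and with each $\{E_i\}$. For the classes $\mu^*\gamma$, the projection formula and $\mu_*\mu^*=\mathrm{id}$ reduce the required equality to the single identity $\mu_*\langle(\mu^*\alpha)^{n-1}\rangle=\langle\alpha^{n-1}\rangle$; this follows by differentiating $\vol(\mu^*\alpha+t\,\mu^*\gamma)=\vol(\mu^*(\alpha+t\gamma))=\vol(\alpha+t\gamma)$ at $t=0$ and invoking the $C^1$-differentiability of the volume function on the big cone (\cite{BFJ09}, and \cite{nystrom2016duality} for transcendental classes on projective manifolds), which identifies the two derivatives with $n\,\langle(\mu^*\alpha)^{n-1}\rangle\cdot\mu^*\gamma$ and $n\,\langle\alpha^{n-1}\rangle\cdot\gamma$ respectively. (One can also see this identity directly from the description of positive products via currents with minimal singularities: the minimal-singularities potential of $\alpha$ pulls back to that of $\mu^*\alpha$ — using the extension theorem for quasi-plurisubharmonic functions across the exceptional locus — and the associated non-pluripolar products agree off the exceptional locus and put no mass there, hence push forward to one another.)

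It remains to prove $\langle(\mu^*\alpha)^{n-1}\rangle\cdot\{E_i\}=0$ for each $i$; the other side vanishes for free, since $\mu^*\langle\alpha^{n-1}\rangle\cdot\{E_i\}=\langle\alpha^{n-1}\rangle\cdot\mu_*\{E_i\}=0$ as $E_i$ is $\mu$-exceptional. The key point is that $\vol(\mu^*\alpha+t\{E_i\})$ is independent of $t\ge0$. When $\alpha$ is a rational class this is immediate from $\mu_*\mathcal O_{\widehat X}(kE_i)=\mathcal O_X$ for every $k\ge0$ (which holds because $E_i$ is contracted to a subvariety of codimension $\ge2$ and $X$ is smooth), so that $h^0\big(\widehat X,\,m\mu^*\alpha+\lfloor mt\rfloor E_i\big)=h^0(X,m\alpha)$ for all $m$; the general case follows by continuity of the volume function, and in the transcendental setting by the non-pluripolar formalism together with \cite{nystrom2016duality}. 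Since $t\mapsto\vol(\mu^*\alpha+t\{E_i\})$ is thus constant for $t\ge0$, its right derivative at $t=0$ vanishes; by $C^1$-differentiability of the volume this derivative equals $n\,\langle(\mu^*\alpha)^{n-1}\rangle\cdot\{E_i\}$, so the latter vanishes and the lemma follows.

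The step I expect to be the main obstacle is the last one — the vanishing $\langle(\mu^*\alpha)^{n-1}\rangle\cdot\{E_i\}=0$, i.e. the fact that no mass of the positive product escapes along the $\mu$-exceptional divisors. The reductions and the handling of the $\mu^*\gamma$-directions are essentially formal, whereas this vanishing is precisely where the deeper inputs ($C^1$-differentiability of the volume and its transcendental analogue, via \cite{BFJ09} and \cite{nystrom2016duality}) are genuinely used.
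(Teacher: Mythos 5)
The paper does not actually prove this lemma: it is quoted from \cite{fl14} (Lemma 6.21 there), with the underlying inputs being exactly the ones you use, namely the $C^1$-differentiability of the volume on the big cone from \cite{BFJ09} and its transcendental analogue from \cite{nystrom2016duality}. Your argument is correct and is essentially the standard proof behind that citation: characterize $\langle\alpha^{n-1}\rangle$ as $\tfrac{1}{n}$ times the derivative of $\vol$, use birational invariance of the volume for the directions $\mu^*\gamma$, and kill the exceptional directions by showing $t\mapsto\vol(\mu^*\alpha+t\{E_i\})$ is constant. Two points deserve a word more of care. First, in the reduction to the big case, the limit defining $\langle(\mu^*\alpha)^{n-1}\rangle$ is a priori taken along $\mu^*\alpha+\varepsilon\widehat{\omega}$ for a K\"ahler class $\widehat{\omega}$ on $\widehat{X}$, and $\mu^*\omega$ is only nef and big; one should invoke monotonicity of positive products and the sandwich $\langle(\mu^*\alpha)^{n-1}\rangle\le\langle(\mu^*\alpha+\varepsilon\mu^*\omega)^{n-1}\rangle\le\langle(\mu^*\alpha+C\varepsilon\widehat{\omega})^{n-1}\rangle$ to see the two limits agree. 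Second, for genuinely transcendental $\alpha$ the density-plus-continuity argument for the constancy of $\vol(\mu^*\alpha+t\{E_i\})$ is not available (rational classes need not be dense in the relevant directions); the clean argument is Boucksom's: for any positive current $T\in\mu^*\alpha+t\{E_i\}$ the pushforward $\mu_*T$ lies in $\alpha$ and has the same absolutely continuous Monge--Amp\`ere mass, giving $\vol(\mu^*\alpha+t\{E_i\})\le\vol(\alpha)$, while the reverse inequality is monotonicity; this is where \cite{Bou04} and the non-pluripolar formalism genuinely enter, as you anticipated.
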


For the structure of $\Mov_1 (X)$, by \cite{lehmann2016positiivty} we have:

\begin{lem}\label{movablestruct}
Let $X$ be a projective manifold of dimension $n$, and let $\gamma$ be a movable $(n-1, n-1)$ class. If $\gamma\cdot \beta>0$ for every non zero movable $(1,1)$ class $\beta$, then $\gamma=\langle L^{n-1}\rangle$ for a big class $L$. Furthermore, $\gamma$ is an interior point of $\Mov_1 (X)$ if and only if $\gamma=\langle L^{n-1}\rangle$ for a big class with $\codim \mathbb{B}_+ (L) \geq 2$, and in this case it has strictly positive intersection against any non zero movable $(1,1)$ class.
\end{lem}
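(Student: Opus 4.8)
The plan is to route everything through the ``polar transform'' of the volume function on divisors together with the differentiability of $\vol$. The inputs I will use are: $\vol$ is $C^1$ on the big cone of $H^{1,1}(X,\mathbb{R})$, with $\tfrac{d}{dt}\big|_{0}\vol(L+tD)=n\,\langle L^{n-1}\rangle\cdot D$ (\cite{BFJ09}, and \cite{nystrom2016duality} transcendentally); the divisorial Zariski decomposition $D=P_\sigma(D)+N_\sigma(D)$ of \cite{Bou04}, with $\vol(D)=\vol(P_\sigma(D))$ and $\langle D^{n-1}\rangle=\langle P_\sigma(D)^{n-1}\rangle$; the duality $\Eff^1(X)^{*}=\Mov_1(X)$ of \cite{nystrom2016duality}; and the restricted reverse Khovanskii--Teissier inequalities, through which $\langle L^{n-1}\rangle\cdot E$ is controlled by the restricted volume of $L$ along a prime divisor $E$ --- in particular $\langle L^{n-1}\rangle\cdot E>0$ precisely when $E\not\subseteq\mathbb{B}_+(L)$.

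For the first assertion, given $\gamma$ with $\gamma\cdot\beta>0$ for every nonzero movable $(1,1)$ class $\beta$, I would minimize $F(L):=(L\cdot\gamma)^{n}/\vol(L)$ over big movable classes $L$; since $\vol(L)=\vol(P_\sigma(L))$ and $\gamma\cdot N_\sigma(L)\ge0$, replacing $L$ by $P_\sigma(L)$ does not increase $F$, so this is the same infimum as over all big classes. A compactness argument --- normalize $\|L\|=1$, pass to a subsequential limit $L_\infty$ in the closed movable cone, and use $\gamma\cdot L_\infty>0$ to exclude the degenerate case $\vol(L_\infty)=0$ --- shows $\inf F>0$ and that it is attained at a big movable $L$. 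At the minimizer the Euler--Lagrange equation in each direction $D$, using the formula for $d\vol$, simplifies to $(D\cdot\gamma)\,\vol(L)=(L\cdot\gamma)\,\langle L^{n-1}\rangle\cdot D$ for all $D$; hence $\gamma=\tfrac{L\cdot\gamma}{\vol(L)}\,\langle L^{n-1}\rangle$, and after rescaling $L$ we get $\gamma=\langle L^{n-1}\rangle$ with $L$ big.

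For the equivalence, necessity is short: if $\gamma$ is interior to $\Mov_1(X)$, the duality $\Eff^1(X)^{*}=\Mov_1(X)$ gives $\gamma\cdot D>0$ for every nonzero pseudo-effective $(1,1)$ class $D$ --- this is already the final clause of the lemma, and it lets us invoke the previous paragraph to write $\gamma=\langle L^{n-1}\rangle$; moreover no prime divisor $E$ can lie in $\mathbb{B}_+(L)$, since then the restricted reverse Khovanskii--Teissier estimate forces $\gamma\cdot E=0$, contradicting interiority, so $\codim\mathbb{B}_+(L)\ge2$. For sufficiency, suppose $\gamma=\langle L^{n-1}\rangle$ with $\codim\mathbb{B}_+(L)\ge2$; then $\gamma\cdot E>0$ for every prime divisor $E$, since none lies in $\mathbb{B}_+(L)$. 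Granting in addition that $\gamma\cdot\beta>0$ for every nonzero movable $(1,1)$ class $\beta$, decompose an arbitrary nonzero $D\in\Eff^1(X)$ as $D=P_\sigma(D)+\sum_i a_iE_i$: then $\gamma\cdot D=\gamma\cdot P_\sigma(D)+\sum_i a_i\,(\gamma\cdot E_i)>0$, the first term being $\ge0$ by the duality (and if $D=P_\sigma(D)$ is itself movable and nonzero we use positivity against movable $(1,1)$ classes directly). By duality $\gamma$ is then interior to $\Mov_1(X)$.

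The main obstacle is the strict positivity $\gamma\cdot\beta>0$ against nonzero movable $(1,1)$ classes when $\gamma=\langle L^{n-1}\rangle$ and $\codim\mathbb{B}_+(L)\ge2$, especially for $\beta$ on the boundary of the pseudo-effective cone. Writing $\beta+\varepsilon\omega=\mu_{\varepsilon*}\widehat\omega_\varepsilon$ with $\widehat\omega_\varepsilon$ K\"ahler and using Lemma~\ref{pullback} gives $\gamma\cdot(\beta+\varepsilon\omega)=\langle(\mu_\varepsilon^{*}L)^{n-1}\rangle\cdot\widehat\omega_\varepsilon>0$, but letting $\varepsilon\to0$ only yields $\ge0$, because the modification $\mu_\varepsilon$ degenerates and $\vol(\beta+\varepsilon\omega)\to0$. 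Producing a uniform lower bound requires representing $\langle L^{n-1}\rangle$ by a Demailly K\"ahler current with analytic singularities along the codimension-$\ge2$ set $\mathbb{B}_+(L)$, combined with super-additivity of positive products (\cite[Lemma~6.21]{fl14}) and the restricted reverse Khovanskii--Teissier inequalities applied uniformly along the degeneration; this is exactly the point where the refined structure of $\Mov_1(X)$ is genuinely used, and it is the heart of the argument.
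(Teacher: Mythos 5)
First, a point of comparison: the paper does not actually prove this lemma --- it is imported wholesale from \cite{lehmann2016positiivty} (``For the structure of $\Mov_1 (X)$, by \cite{lehmann2016positiivty} we have:''), so there is no internal argument to measure yours against. Judged on its own, your treatment of the first assertion is essentially the argument of that reference and is sound: minimize $F(L)=(L\cdot\gamma)^n/\vol(L)$, reduce to big movable classes via the divisorial Zariski decomposition (using $\vol(L)=\vol(P_\sigma(L))$ and $\gamma\cdot N_\sigma(L)\ge 0$, the latter from the easy inclusion $\Mov_1(X)\subseteq \Eff^1(X)^*$), use the hypothesis $\gamma\cdot\beta>0$ on nonzero movable classes to prevent volume collapse of a normalized minimizing sequence, and read off $\gamma=c\,\langle L^{n-1}\rangle$ from the Euler--Lagrange equation together with the differentiability of $\vol$ from \cite{BFJ09}. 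The necessity direction of the ``furthermore'' is also fine, granting the standard fact that for big movable $L$ one has $\langle L^{n-1}\rangle\cdot E=0$ exactly when the prime divisor $E$ lies in $\mathbb{B}_+(L)$ (this comes from the theory of restricted volumes rather than from reverse Khovanskii--Teissier inequalities, but the fact is correct).

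The genuine gap is in the sufficiency direction. After decomposing a pseudo-effective class as $D=P_\sigma(D)+\sum_i a_iE_i$, everything reduces to the single inequality $\langle L^{n-1}\rangle\cdot\beta>0$ for every nonzero movable $(1,1)$ class $\beta$ when $\codim\mathbb{B}_+(L)\ge 2$ --- and at exactly this point you write ``Granting in addition that $\gamma\cdot\beta>0$ \dots'', i.e.\ you assume the statement that carries the entire direction, and your closing paragraph only lists the tools one might use without executing any estimate. As you yourself note, the naive degeneration $\beta+\varepsilon\omega=\mu_{\varepsilon*}\widehat\omega_\varepsilon$ with $\varepsilon\to 0$ only yields $\ge 0$. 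A workable route is to use bigness of $L$: for small $\varepsilon$ the class $L-\varepsilon\omega$ is pseudo-effective, so super-additivity of positive products (\cite[Lemma 6.21]{fl14}) gives that $\langle L^{n-1}\rangle-\varepsilon^{n-1}\omega^{n-1}$ lies in $\Eff_1(X)$; but one must then still justify that a pseudo-effective $(n-1,n-1)$ class pairs nonnegatively with a movable $(1,1)$ class, which is not automatic (the cohomological pullback of an $(n-1,n-1)$ class under a modification need not remain pseudo-effective, so the projection-formula trick used elsewhere in the paper does not apply directly). Until this step is supplied, the ``if'' half of the interiority criterion and the final clause of the lemma remain unproved in your sketch.
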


\subsection{Divisorial Zariski decomposition} By the main result of \cite{Bou04}, we have the divisorial Zariski decomposition for pseudo-effective $(1,1)$ classes on any compact complex manifold.

\begin{lem}\label{zariskidecom}
Let $X$ be a compact complex manifold, and let $\alpha \in \Eff^1 (X)$ be a pseudo-effective $(1,1)$ class, then $\alpha$ admits a decomposition $\alpha = P(\alpha) + N(\alpha)$
such that $P(\alpha)$ is movable, and $N(\alpha)$ is an effective divisor class which contains only one positive current.
\end{lem}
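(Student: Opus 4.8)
The statement is Boucksom's divisorial Zariski decomposition \cite{Bou04}, so the plan is to reconstruct its proof. First I would fix a smooth representative $\theta \in \alpha$ and, following Demailly's regularization, introduce the minimal multiplicity $\nu(\alpha,x)$ at a point $x\in X$ as the infimum of Lelong numbers $\nu(T,x)$ over all closed positive currents $T\in\alpha$; equivalently $\nu(\alpha,x)=\sup_{\varepsilon>0}\nu(\theta+dd^c\varphi_\varepsilon,x)$ for a family $\varphi_\varepsilon$ of $\theta$-psh potentials with analytic singularities decreasing to a potential with minimal singularities, Demailly's approximation controlling the Lelong-number loss by $\varepsilon$. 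One checks that $x\mapsto\nu(\alpha,x)$ is upper semicontinuous, and that along any prime divisor $D$ it is constant outside a countable union of proper analytic subsets; call $\nu(\alpha,D)$ this generic value.

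The crucial point is then finiteness: only finitely many prime divisors $D$ have $\nu(\alpha,D)>0$, and moreover $\sum_D\nu(\alpha,D)[D]$ has bounded mass. Here I would argue that if $D_1,\dots,D_N$ are distinct prime divisors with $\nu(\alpha,D_i)\geq\varepsilon$, then subtracting the integration currents $\varepsilon[D_i]$ from a minimal-singularity current in $\alpha$ and invoking Siu's decomposition shows $\alpha-\varepsilon\sum_i[D_i]$ is still pseudo-effective; pairing with a fixed Gauduchon metric $\omega$ bounds $\varepsilon\sum_i\int_{D_i}\omega^{n-1}\leq\int_X\alpha\wedge\omega^{n-1}$, and combined with a Zariski-type negativity for the classes of prime divisors sitting in the negative part of a fixed pseudo-effective class, one concludes that $N(\alpha):=\sum_D\nu(\alpha,D)[D]$ is a well-defined effective $\mathbb{R}$-divisor class with finitely many components. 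Setting $P(\alpha):=\alpha-N(\alpha)$, the same Siu-decomposition argument shows $P(\alpha)$ is pseudo-effective, and by construction $\nu(P(\alpha),D)=0$ for every prime divisor $D$ — which is exactly the definition of movable used in Section~\ref{secprelim}.

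It remains to show $\{N(\alpha)\}$ contains a unique closed positive current. Given a closed positive $(1,1)$-current $S$ in that class, apply Siu's decomposition $S=R+\sum_j\nu(S,D_j)[D_j]$, where $R$ carries no divisorial mass; passing to cohomology gives $\{R\}=\sum_j(\nu(\alpha,D_j)-\nu(S,D_j))\{D_j\}$. Using that a suitable quadratic form (intersection against powers of the Gauduchon metric, or the duality pairing) is negative definite on the subspace spanned by the components of $N(\alpha)$ — again the Zariski-negativity input — together with $R\geq 0$, one forces $R=0$ in cohomology, hence $R=0$ as a current, and simultaneously $\nu(S,D_j)=\nu(\alpha,D_j)$ for all $j$; thus $S=\sum_j\nu(\alpha,D_j)[D_j]=N(\alpha)$.

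The main obstacle is the negativity/finiteness step: establishing that the components of the negative part of a pseudo-effective class span a subspace on which a natural quadratic form is negative definite, so that one cannot subtract an unboundedly large effective divisor from $\alpha$ while remaining pseudo-effective. On a projective or K\"ahler manifold this is close to the classical Zariski lemma, but here the statement is for an arbitrary compact complex manifold, where there is no polarization and one must work throughout with a Gauduchon metric and with regularization of currents rather than with cohomological intersection numbers; this is the technical heart of \cite{Bou04}.
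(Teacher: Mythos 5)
The paper offers no proof of this lemma: it is imported verbatim as the main result of \cite{Bou04} (Boucksom's divisorial Zariski decomposition), so there is no in-paper argument to compare against, and your task was effectively to reconstruct \cite{Bou04}. Your outline does follow Boucksom's actual strategy --- minimal multiplicities defined via currents with minimal singularities in the approximating classes $\alpha+\varepsilon\omega$, generic multiplicities $\nu(\alpha,D)$ along prime divisors, $N(\alpha)=\sum_D \nu(\alpha,D)\,D$, and Siu decomposition to see that $P(\alpha)=\alpha-N(\alpha)$ remains pseudo-effective with vanishing divisorial multiplicities, hence movable in the sense of Section 2. But the two steps you yourself flag as the technical heart are exactly where the argument, as written, does not close.

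First, the mass bound obtained by pairing with $\omega^{n-1}$ for a Gauduchon metric $\omega$ (together with the uniform lower bound on $\int_D\omega^{n-1}$ from Lelong's monotonicity formula) only shows that the prime divisors with $\nu(\alpha,D)>0$ form a countable family with summable multiplicities; it does not give finiteness. In \cite{Bou04} finiteness comes from showing that the components of the negative part form an \emph{exceptional} family, whose classes are linearly independent in the finite-dimensional space $H^{1,1}(X,\mathbb{R})$, so their number is at most $\dim H^{1,1}(X,\mathbb{R})$. Second, on an arbitrary compact complex manifold there is no intersection form on divisor classes for which ``negative definiteness on the span of the components'' can be formulated, so the uniqueness step cannot be run as you describe (that is the surface-case Zariski argument). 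Boucksom instead shows that $N(\alpha)$ is exceptional in the precise sense that the minimal multiplicity of the class $\{N(\alpha)\}$ along each component $D_j$ equals its coefficient $a_j$; then any closed positive current $S$ in that class satisfies $\nu(S,D_j)\geq a_j$, hence $S\geq\sum_j a_j[D_j]$ by Siu, and the difference is a positive current with vanishing Bott-Chern class, which is annihilated by integration against $\omega^{n-1}$ for Gauduchon $\omega$ and therefore zero. With these two substitutions --- linear independence of an exceptional family in place of the mass bound, and exceptionality of $N(\alpha)$ in place of a negative-definite form --- your outline becomes the proof in \cite{Bou04}.
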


\subsection{Morse type bigness criterion for $(n-1, n-1)$ classes}
By using basic properties of positive products of pseudo-effective $(1,1)$ classes, in \cite{xiao2014movable} we observed that the main result of \cite{popovici2016sufficientbig} can be generalized from nef classes to pseudo-effective classes. In this way, we proved the following Morse type bigness for the difference of two movable $(n-1, n-1)$ classes (see \cite[Theorem 1.3]{xiao2014movable}).

\begin{lem}\label{morsecurve}
Let $X$ be a compact K\"ahler manifold of dimension $n$, and let $\alpha, \beta\in \Eff^1 (X)$ be two pseudo-effective classes. Then $\vol(\alpha)-n\alpha\cdot \langle \beta^{n-1}\rangle>0$ implies that there exists a strictly positive $(n-1, n-1)$ current in the Bott-Chern class $\langle \alpha^{n-1}\rangle-\langle \beta^{n-1}\rangle$, or equivalently, it is an interior point of $\Eff_1 (X)$. In particular, we have a Morse type bigness criterion for the difference of two complete intersection classes.
\end{lem}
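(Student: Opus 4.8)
The plan is to bootstrap from the nef case, which is the content of \cite{popovici2016sufficientbig}, to the pseudo-effective case using the monotonicity and continuity of positive products (\cite[Lemma 6.21]{fl14}). First I would reduce to the case where $\alpha$ and $\beta$ are both big. The hypothesis forces $\vol(\alpha)>0$, so $\alpha$ is already big; for $\beta$, fix a K\"ahler class $\omega$ and pass to $\beta_t:=\beta+t\omega$, which is big. The Morse quantity $\vol(\alpha)-n\alpha\cdot\langle\beta_t^{n-1}\rangle$ depends continuously on $t$ and hence stays positive for small $t>0$, while $\langle\beta_t^{n-1}\rangle-\langle\beta^{n-1}\rangle\in\Eff_1(X)$ by monotonicity; since the interior of a convex cone is stable under adding elements of the cone, it suffices to treat $\beta_t$. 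So assume $\alpha,\beta$ are big.

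Next I would pass to a birational model on which both classes become nef. By the Fujita-type approximation of positive products (\cite{BEGZ10MAbig, BFJ09}, and \cite{nystrom2016duality} in the transcendental setting), given $\delta>0$ there is a modification $\mu:\widehat X\to X$ and nef and big classes $\widehat\alpha,\widehat\beta$ on $\widehat X$, together with effective classes $\widehat F,\widehat E$, such that $\mu^*\alpha=\widehat\alpha+\widehat F$, $\mu^*\beta=\widehat\beta+\widehat E$, $\widehat\alpha^n\geq\vol(\alpha)-\delta$, $\mu_*\widehat\alpha^{n-1}\preceq\langle\alpha^{n-1}\rangle$, $\mu_*\widehat\beta^{n-1}\preceq\langle\beta^{n-1}\rangle$, and $\|\langle\beta^{n-1}\rangle-\mu_*\widehat\beta^{n-1}\|\leq\delta$. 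Using $\widehat\alpha\preceq\mu^*\alpha$, the projection formula and the nefness of $\widehat\beta^{n-1}$, one gets $\widehat\alpha\cdot\widehat\beta^{n-1}\leq(\mu^*\alpha)\cdot\widehat\beta^{n-1}=\alpha\cdot\mu_*\widehat\beta^{n-1}\leq\alpha\cdot\langle\beta^{n-1}\rangle+C\delta$, whence $\widehat\alpha^n-n\,\widehat\alpha\cdot\widehat\beta^{n-1}>0$ once $\delta$ is small. Then \cite{popovici2016sufficientbig}, applied to the nef classes $\widehat\alpha,\widehat\beta$ on the compact K\"ahler manifold $\widehat X$, produces a strictly positive $(n-1,n-1)$ current in $\widehat\alpha^{n-1}-\widehat\beta^{n-1}$, i.e.\ this class lies in the interior of $\Eff_1(\widehat X)$.

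Finally I would push down. As $\mu_*$ is a surjective linear map sending $\Eff_1(\widehat X)$ into $\Eff_1(X)$, it carries interior points to interior points, so $\mu_*\widehat\alpha^{n-1}-\mu_*\widehat\beta^{n-1}$ is interior in $\Eff_1(X)$; and the identity $\langle\alpha^{n-1}\rangle-\langle\beta^{n-1}\rangle=(\mu_*\widehat\alpha^{n-1}-\mu_*\widehat\beta^{n-1})+(\langle\alpha^{n-1}\rangle-\mu_*\widehat\alpha^{n-1})-(\langle\beta^{n-1}\rangle-\mu_*\widehat\beta^{n-1})$ exhibits the target class as an interior point, plus a pseudo-effective class, minus a pseudo-effective class of norm $O(\delta)$. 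The last sentence of the statement is the special case where $\alpha,\beta$ are nef, in which $\langle\alpha^{n-1}\rangle=\alpha^{n-1}$ and $\langle\beta^{n-1}\rangle=\beta^{n-1}$ are genuine complete intersection classes.

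The step I expect to be the main obstacle is this last comparison: one must guarantee that the interiority produced on $\widehat X$ comes with a margin bounded below in terms of the fixed positive number $\vol(\alpha)-n\alpha\cdot\langle\beta^{n-1}\rangle$, uniformly as the auxiliary model is refined, so that shrinking $\delta$ absorbs the $O(\delta)$ error term and keeps $\langle\alpha^{n-1}\rangle-\langle\beta^{n-1}\rangle$ in the interior of $\Eff_1(X)$. This is precisely where the quantitative content of Popovici's criterion matters; an alternative is to re-run his pluripotential-theoretic argument directly, replacing the smooth nef forms by positive currents with minimal singularities in $\alpha$ and $\beta$ (suitably regularized \`a la Demailly), so that the pointwise implication ``$u^n\geq n\,u\wedge v^{n-1}$ $\Rightarrow$ $u^{n-1}\geq v^{n-1}$'' for semipositive $(1,1)$-forms supplies the estimate with an explicit margin, the passage between $X$ and its modifications being handled by the properties of positive products in \cite[Lemma 6.21]{fl14}.
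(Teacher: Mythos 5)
Your overall strategy --- reduce to big classes via monotonicity and continuity of positive products, then Fujita-approximate on a modification so that Popovici's criterion for nef classes applies --- is exactly the route this paper has in mind: note that the paper does not reprove this lemma but quotes it from \cite{xiao2014movable}, describing the proof precisely as ``the main result of \cite{popovici2016sufficientbig} generalized from nef to pseudo-effective classes using basic properties of positive products.'' Your preliminary reductions and the verification of the hypothesis $\widehat\alpha^{\,n}-n\,\widehat\alpha\cdot\widehat\beta^{\,n-1}>0$ on the model are correct.

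The gap is the one you flag yourself, and as written it is genuine: ``interior point of $\Eff_1(\widehat X)$'' is a qualitative open condition, and the margin it supplies on the model $\widehat X_\delta$ has no a priori lower bound as $\delta\to 0$, so it cannot be guaranteed to absorb the subtracted pseudo-effective error of size $O(\delta)$. The repair is not to transport interiority between models at all, but to transport the \emph{quantitative} content of Popovici's theorem, namely the inequality $n\,(N\cdot\alpha^{n-1})(\alpha\cdot\beta^{n-1})\geq \alpha^n\,(N\cdot\beta^{n-1})$ for nef classes $\alpha,\beta,N$. Applying this on each model to $\widehat\alpha,\widehat\beta,\mu^*N$ and letting $\delta\to 0$, every term converges under Fujita approximation ($\widehat\alpha^{\,n}\to\vol(\alpha)$, $\mu_*\widehat\alpha^{\,n-1}\to\langle\alpha^{n-1}\rangle$, $\widehat\alpha\cdot\widehat\beta^{\,n-1}\to\alpha\cdot\langle\beta^{n-1}\rangle$), which is exactly Lemma \ref{reverseKT}. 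From it, for every nef $N$ one gets $(\langle\alpha^{n-1}\rangle-\langle\beta^{n-1}\rangle)\cdot N\geq \left(1-\frac{n\,\alpha\cdot\langle\beta^{n-1}\rangle}{\vol(\alpha)}\right)\langle\alpha^{n-1}\rangle\cdot N$, where the constant is strictly positive by hypothesis (the degenerate case $\alpha\cdot\langle\beta^{n-1}\rangle=0$ forces $\langle\beta^{n-1}\rangle=0$ and is trivial). Hence the class $\langle\alpha^{n-1}\rangle-\langle\beta^{n-1}\rangle-c\,\langle\alpha^{n-1}\rangle$ pairs nonnegatively with every nef class for some $c>0$, so by the Lamari/Hahn--Banach duality underlying Lemma \ref{thm conedual pointwisepos} it lies in $\Eff_1(X)$; since $\langle\alpha^{n-1}\rangle$ is an interior point of $\Eff_1(X)$ (as $\alpha$ is big), the conclusion follows with no error terms to control. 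Your fallback plan of re-running the pluripotential argument with currents of minimal singularities is considerably harder and unnecessary once this inequality is isolated.
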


\begin{rmk}
Indeed, in \cite[Section 4]{lehmann2016convexity} we studied Morse type inequality with respect to a subcone. The above result can be restated as: $\Eff_1(X)$ (with a suitable volume type function) satisfies a Morse type inequality with respect to its subcone $\Mov_1 (X)$. And from the viewpoint of duality in convex analysis, it is proved in \cite[Section 4]{lehmann2016convexity} that the polar transform gives a natural way of translating cone positivity conditions from a cone to its dual cone, and the above result can also be derived from (and fits very well with) the abstract setting.
\end{rmk}

\subsection{Reverse Khovanskii-Teissier inequality} By the result of \cite{popovici2016sufficientbig} (or \cite{nystrom2016duality} for projective manifolds) and its generalization to pseudo-effective classes, we have the following result (see e.g. \cite[Section 3.4]{xiao2014movable}).

\begin{lem}\label{reverseKT}
Let $X$ be a compact K\"ahler manifold of dimension $n$, and let $\alpha, \beta\in \Eff^1 (X)$ be two pseudo-effective classes. Then for any nef class $N$ we have
\begin{equation*}
  n(N\cdot\langle \alpha^{n-1}\rangle)(\alpha\cdot \langle \beta^{n-1} \rangle) \geq \vol(\alpha) (N\cdot \langle \beta^{n-1} \rangle).
\end{equation*}
\end{lem}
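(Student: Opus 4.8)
The plan is to deduce the inequality from the Morse type bigness criterion, Lemma~\ref{morsecurve}, by a rescaling trick, after clearing away the degenerate cases.

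First, some reductions. If $\vol(\alpha)=0$ the right-hand side vanishes and the left-hand side is $\ge 0$, since $N\cdot\langle\alpha^{n-1}\rangle\ge 0$ ($N$ nef, $\langle\alpha^{n-1}\rangle$ pseudo-effective) and $\alpha\cdot\langle\beta^{n-1}\rangle\ge 0$ (a movable $(n-1,n-1)$ class pairs nonnegatively with a pseudo-effective $(1,1)$ class); so we may assume $\alpha$ is big. If $\langle\beta^{n-1}\rangle=0$ both sides vanish, so we may also assume $\langle\beta^{n-1}\rangle\ne 0$. Then $s:=n\bigl(\alpha\cdot\langle\beta^{n-1}\rangle\bigr)>0$, because a big class, lying in the interior of $\Eff^1(X)$, pairs strictly positively with every nonzero element of its dual cone $\Mov_1(X)$, and $\langle\beta^{n-1}\rangle$ is such an element (concretely, writing $\alpha=(\alpha-\delta\omega_0)+\delta\omega_0$ with $\alpha-\delta\omega_0$ pseudo-effective, $\alpha\cdot\langle\beta^{n-1}\rangle\ge\delta\,\omega_0\cdot\langle\beta^{n-1}\rangle>0$ since the trace measure of a nonzero positive $(n-1,n-1)$ current does not vanish).

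The main step. Put $t:=\vol(\alpha)>0$ and $\beta':=(t/s)^{1/(n-1)}\beta$, again a pseudo-effective $(1,1)$ class. By homogeneity of positive products $\langle(\beta')^{n-1}\rangle=(t/s)\langle\beta^{n-1}\rangle$, whence
\[
\vol(\alpha)-n\,\alpha\cdot\langle(\beta')^{n-1}\rangle=t-\tfrac{t}{s}\cdot s=0 .
\]
For $\varepsilon>0$, replacing $\alpha$ by the big class $(1+\varepsilon)\alpha$ turns the left side into $t\bigl((1+\varepsilon)^{n}-(1+\varepsilon)\bigr)>0$, so Lemma~\ref{morsecurve} gives that $(1+\varepsilon)^{n-1}\langle\alpha^{n-1}\rangle-\langle(\beta')^{n-1}\rangle$ is an interior point of $\Eff_1(X)$. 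Letting $\varepsilon\downarrow 0$ and using that $\Eff_1(X)$ is closed, $\langle\alpha^{n-1}\rangle-\langle(\beta')^{n-1}\rangle\in\Eff_1(X)$, and rescaling by $s$,
\[
\Psi:=n\bigl(\alpha\cdot\langle\beta^{n-1}\rangle\bigr)\langle\alpha^{n-1}\rangle-\vol(\alpha)\langle\beta^{n-1}\rangle=s\langle\alpha^{n-1}\rangle-t\langle\beta^{n-1}\rangle
\]
is pseudo-effective, i.e. contains a $d$-closed positive $(n-1,n-1)$ current $S$.

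To conclude, pair $\Psi$ with $N$. Writing $N=\lim_k\{\omega_k\}$ with $\omega_k$ K\"ahler, each $\omega_k\wedge S$ is a nonnegative measure, so $\{\omega_k\}\cdot\Psi=\int_X\omega_k\wedge S\ge 0$, and hence $N\cdot\Psi\ge 0$, which is exactly the asserted inequality. Since the substantive analytic content is already packaged in Lemma~\ref{morsecurve}, the only delicate point remaining is the passage through the \emph{equality} case of that lemma; this is taken care of by the perturbation $\alpha\rightsquigarrow(1+\varepsilon)\alpha$ together with homogeneity of positive products and closedness of $\Eff_1(X)$. (Alternatively, as indicated in \cite{xiao2014movable}, one may prove the inequality first for K\"ahler $\alpha,\beta$ from Popovici's mass-concentration estimate in \cite{popovici2016sufficientbig} and then pass to pseudo-effective classes by continuity of positive products.)
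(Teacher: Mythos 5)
Your proof is correct. Note, however, that the paper itself gives no proof of Lemma \ref{reverseKT}: it is quoted from the literature (Popovici's mass-concentration results, \cite{nystrom2016duality} in the projective case, and \cite[Section 3.4]{xiao2014movable} for the extension to pseudo-effective classes), so there is no internal argument to compare against line by line. What you have done is supply a self-contained derivation of the reverse Khovanskii--Teissier inequality from the Morse-type bigness criterion (Lemma \ref{morsecurve}) by the normalization $\beta'=(t/s)^{1/(n-1)}\beta$, a perturbation $\alpha\rightsquigarrow(1+\varepsilon)\alpha$ to escape the equality case, closedness of $\Eff_1(X)$, and pairing the resulting pseudo-effective $(n-1,n-1)$ class with the nef class $N$. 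Each step checks out: the homogeneity $\langle(\lambda\beta)^{n-1}\rangle=\lambda^{n-1}\langle\beta^{n-1}\rangle$, the strict positivity $s>0$ for $\alpha$ big and $\langle\beta^{n-1}\rangle\neq 0$, and the nonnegativity of $N\cdot\Psi$ for $\Psi$ pseudo-effective are all standard, and your reductions dispose of the degenerate cases cleanly. This route is exactly the one the paper gestures at in the remark following the lemma (``once we have a Morse type inequality, then we can also have some kind of reverse Khovanskii--Teissier inequality,'' cf.\ \cite[Section 4.2]{lehmann2016convexity}), so your argument makes precise, at the cost of taking Lemma \ref{morsecurve} as the black box instead of Popovici's estimate, a logical dependence that the paper only asserts. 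The references cited in the paper instead prove the inequality directly from mass-concentration for Monge--Amp\`ere equations, which is the deeper input underlying Lemma \ref{morsecurve} itself; your alternative parenthetical remark at the end correctly identifies that path as well.
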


\begin{rmk}
It is noted in \cite[Section 4.2]{lehmann2016convexity} that, once we have a More type inequality, then we can also have some kind of ``reverse'' Khovanskii-Teissier inequality (and it is useful when we translate the positivity in a cone to its dual cone).
\end{rmk}

\subsubsection{Restricted version} The above result is sufficient in the proof of Theorem \ref{main result1}. For a projective manifold, because of the existence of ample divisors, it is also sufficient to improve the positivity in the proof of Theorem \ref{main result}.
But for non-projective K\"ahler manifold, we need the following result which follows from
\cite[inequality (63)]{popovici15}.
Indeed, the paper \cite{popovici15} gives more general and stronger results. For our application, we need one of its corollaries.

\begin{lem}\label{thm resKT}
Let $X$ be a compact K\"ahler manifold of dimension $n$, and let $\omega, \alpha, \beta$ be three K\"ahler classes. Then we have
\begin{equation*}
  (n-1) (\omega^2 \cdot \alpha^{n-2}) (\omega^{n-1}\cdot \beta) \geq \omega^n (\omega \cdot \alpha^{n-2} \cdot \beta).
\end{equation*}
\end{lem}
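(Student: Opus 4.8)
The inequality in Lemma~\ref{thm resKT} is a \emph{restricted}, and in general \emph{transcendental}, reverse Khovanskii--Teissier inequality, so I would approach it in the same spirit as Lemma~\ref{reverseKT}: reduce it to a pointwise/integral estimate for K\"ahler forms, and handle the genuinely transcendental part by solving a complex Monge--Amp\`ere equation. First, both sides are linear functionals of the class $\{\beta\}$ on the K\"ahler cone and the inequality is a closed condition, so it suffices to treat nef $\beta$; in particular all three classes may be represented by (limits of) honest K\"ahler forms $\omega,\alpha,\beta$, which we now fix. At a point $x\in X$ choose holomorphic coordinates simultaneously diagonalising $\omega$ and $\alpha$, say $\omega=i\sum_k dz_k\wedge d\bar z_k$ and $\alpha=i\sum_k\lambda_k\,dz_k\wedge d\bar z_k$ with $\lambda_k>0$, and write $\beta_{k\bar k}>0$ for the diagonal entries of $\beta$ in this frame. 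A direct expansion gives, pointwise,
\[
\frac{\omega^{n-1}\wedge\beta}{\omega^n}=\frac1n\sum_k\beta_{k\bar k},\qquad
\frac{\omega^2\wedge\alpha^{n-2}}{\omega^n}=\frac{2\,e_{n-2}(\lambda)}{n(n-1)},\qquad
\frac{\omega\wedge\alpha^{n-2}\wedge\beta}{\omega^n}=\frac{\sum_k\beta_{k\bar k}\,E_k}{n(n-1)},
\]
where $e_{n-2}$ is the elementary symmetric polynomial and $E_k:=e_{n-2}(\lambda_1,\dots,\widehat{\lambda_k},\dots,\lambda_n)$ (only the diagonal entries of $\beta$ survive in these wedges). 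Substituting, the statement becomes the integral inequality
\[
(n-1)\Bigl(\int_X e_{n-2}(\lambda)\,\omega^n\Bigr)\Bigl(\int_X\sum_k\beta_{k\bar k}\,\omega^n\Bigr)\ \geq\ \frac n2\,\Bigl(\int_X\omega^n\Bigr)\Bigl(\int_X\sum_k\beta_{k\bar k}\,E_k\,\omega^n\Bigr).
\]

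The delicate point is the last factor. From the recursion $e_{n-2}(\lambda)=E_k+\lambda_k\,e_{n-3}(\lambda_1,\dots,\widehat{\lambda_k},\dots,\lambda_n)\geq E_k$ one gets the pointwise bound $\sum_k\beta_{k\bar k}E_k\leq e_{n-2}(\lambda)\sum_k\beta_{k\bar k}$; but integrating this and comparing with the product of the integrals goes the wrong way (the Chebyshev correlation inequality has the opposite sign), so no purely pointwise argument can close it --- indeed one checks that for $\alpha=\omega$ and $n\geq 5$ the would-be pointwise inequality fails. This is exactly where one must exploit the freedom in the choice of representatives: following Popovici, one solves a suitable complex Monge--Amp\`ere equation (Yau's theorem) to pass to representatives for which one of the two densities is made (essentially) constant, so that the unfavourable correlation disappears, after which a sharp pointwise inequality among the $\sigma$'s of the eigenvalues $\lambda_k$ plus a single integration closes the estimate. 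This analytic mechanism is precisely the content of \cite[inequality (63)]{popovici15}: Lemma~\ref{thm resKT} is the special case obtained by specialising the three K\"ahler classes there to $\omega,\alpha,\beta$, and I would simply invoke that inequality once the reduction above is in place.

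The main obstacle is exactly this transcendental normalisation step. On a \emph{projective} $X$ one has ample divisors to cut with, and in fact, as observed just before the statement, Lemma~\ref{reverseKT} already suffices to run the argument for Theorem~\ref{main result} in the projective case; so the genuine role of Lemma~\ref{thm resKT} is to cover compact K\"ahler manifolds that carry no useful complete intersections, where the substitute for the Hodge index theorem is precisely the Monge--Amp\`ere estimate underlying \cite{popovici15}. The remaining ingredients --- the pointwise identities displayed above and the reduction to nef $\beta$ --- are routine.
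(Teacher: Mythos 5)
Your proof is essentially the paper's: the lemma is not proved from scratch there either, but is simply quoted as a special case of \cite[inequality (63)]{popovici15}, which is exactly where your argument also lands after the (correct) pointwise expansion of the four intersection numbers. One caveat: your aside that the ``would-be pointwise inequality fails for $\alpha=\omega$ and $n\geq 5$'' is false --- since $E_k\leq e_{n-2}(\lambda)$ one has $n\sum_k\beta_{k\bar k}E_k\leq 2(n-1)\,e_{n-2}(\lambda)\sum_k\beta_{k\bar k}$ for every $n\geq 2$, and the paper's Proposition \ref{pointwiseineq} asserts precisely that the pointwise version of the inequality always holds; the genuine obstruction, which you do identify correctly, is only that a pointwise inequality between densities does not integrate to an inequality between \emph{products} of integrals, and that correlation problem is exactly what the Monge--Amp\`ere normalisation in \cite{popovici15} is designed to remove.
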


\begin{rmk}
We use the terminology ``restricted'' because if $\omega, \alpha, \beta$ are K\"ahler classes and $H$ is a prime divisor, then Lemma \ref{reverseKT} implies:
\begin{equation*}
(n-1) (\omega_H \cdot \alpha_H^{n-2}) (\omega_H ^{n-2}\cdot \beta_H) \geq \omega_H ^{n-1} (\alpha_H ^{n-2} \cdot \beta_H),
\end{equation*}
where $\omega_H, \alpha_H, \beta_H$ are the restrictions of the classes on $H$.
In particular, if $\omega$ is ample, then Lemma \ref{thm resKT} follows from Lemma \ref{reverseKT}; see Section \ref{secdisc} for more related discussions.
\end{rmk}

\subsection{The duality of cones and pointwise positivity}
By confirming a conjecture of \cite{BDPP13}, for projective manifolds, the following duality of (transcendental) cones is proved in \cite{nystrom2016duality}:
\begin{equation*}
  \Eff^1 (X)^* =\Mov_1 (X).
\end{equation*}

By using the geometric form of Hanhn-Banach theorem, by \cite{lamarilemma} and \cite{tomamobcurve} (see also \cite[Appendix]{fuxiao14kcone} for the extension of Toma's result to K\"ahler setting), we have

\begin{lem}\label{thm conedual pointwisepos}
Let $X$ be a compact K\"ahler manifold of dimension $n$, then we have:
\begin{itemize}
  \item Every interior point of $\Eff^1 (X)^*$ can be represented by a smooth strictly positive $(n-1, n-1)$ form (up to some form $\partial \psi + \overline{\partial \psi}$);
  \item Furthermore,
  if the cone duality $\Eff^1 (X)^* =\Mov_1 (X)$ holds, then every interior point of $\Eff^1 (X)^*$ can be represented by a smooth $d$-closed strictly positive $(n-1, n-1)$ form (up to some form $i\partial \bar \partial \theta$).
\end{itemize}
\end{lem}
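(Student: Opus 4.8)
The plan is to derive both statements from the geometric form of the Hahn--Banach separation theorem applied inside the Fr\'echet space $\mathcal{E}^{n-1,n-1}(X)$ of smooth real $(n-1,n-1)$-forms, whose topological dual is the space of $(1,1)$-currents; this is precisely the mechanism behind Lamari's duality lemma and its refinement by Toma. The starting point is the convex-geometric fact that $\Eff^1(X)$ is a closed, pointed, full-dimensional cone in the finite-dimensional space $H^{1,1}(X,\mathbb{R})$, so that a class $\gamma$ is an interior point of $\Eff^1(X)^*$ if and only if $\gamma\cdot\{T\}>0$ for every nonzero $d$-closed positive $(1,1)$-current $T$; here one uses that a $d$-closed positive $(1,1)$-current with trivial class vanishes, since its global $dd^c$-potential is psh hence constant.

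For the first item, fix a smooth $d$-closed representative $\Theta_0$ of $\gamma$ and argue by contradiction: assume no form $\Theta_0-\partial\psi-\overline{\partial\psi}$ with $\psi\in\mathcal{E}^{n-2,n-1}(X)$ is strictly positive. Then the nonempty open convex cone $\mathcal{P}\subset\mathcal{E}^{n-1,n-1}(X)$ of strictly positive $(n-1,n-1)$-forms (it contains $\omega^{n-1}$) is disjoint from the affine subspace $A_0:=\Theta_0+\{\partial\psi+\overline{\partial\psi}\}$, so Hahn--Banach produces a nonzero $(1,1)$-current $T$ that is non-negative against $\mathcal{P}$ --- hence a positive current --- and is constant, with value $\le 0$, against $A_0$. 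The constancy forces $\int_X T\wedge(\partial\psi+\overline{\partial\psi})=0$ for all $\psi$, and integrating by parts this gives $\partial T=\bar\partial T=0$, i.e.\ $T$ is $d$-closed. Thus $\{T\}$ is a nonzero pseudo-effective class with $\gamma\cdot\{T\}=\int_X T\wedge\Theta_0\le 0$, contradicting the interiority of $\gamma$. Hence some $\Theta_0-\partial\psi-\overline{\partial\psi}$ is the desired strictly positive representative of $\gamma$ up to $\partial\psi+\overline{\partial\psi}$.

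For the second item I would repeat the argument with $A_0$ replaced by the \emph{smaller} affine subspace $A_1:=\Theta_0+\{i\partial\bar\partial\theta:\theta\in\mathcal{E}^{n-2,n-2}(X)\}$ of smooth $d$-closed representatives of $\gamma$. If $\mathcal{P}\cap A_1=\emptyset$, Hahn--Banach again gives a nonzero positive $(1,1)$-current $T$, constant with value $\le 0$ on $A_1$; but now the constancy only yields $\int_X T\wedge i\partial\bar\partial\theta=0$ for all $\theta$, i.e.\ $i\partial\bar\partial T=0$, and $T$ need not be $d$-closed. This is exactly where the cone duality $\Eff^1(X)^*=\Mov_1(X)$, equivalently $\Mov_1(X)^*=\Eff^1(X)$, of \cite{nystrom2016duality} is needed. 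I would check that the class of the positive $\partial\bar\partial$-closed current $T$ (well defined via the $\partial\bar\partial$-lemma on the K\"ahler manifold $X$) lies in $\Mov_1(X)^*$: for a modification $\mu:\widehat X\to X$ and K\"ahler classes $\widetilde A_1,\dots,\widetilde A_{n-1}$ on $\widehat X$, whose pushforwards generate the closed cone $\Mov_1(X)$, pulling $T$ back and using the projection formula expresses the pairing as $\int_{\widehat X}\mu^*T\wedge\widetilde A_1\wedge\cdots\wedge\widetilde A_{n-1}\ge 0$; passing to limits and invoking the duality then puts $\{T\}\in\Eff^1(X)$, and $\{T\}\neq 0$ because otherwise $\int_X T\wedge\omega^{n-1}=0$ would force the positive current $T$ to vanish. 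As in the first item $\gamma\cdot\{T\}=\int_X T\wedge\Theta_0\le 0$ contradicts the interiority of $\gamma$, so some smooth $d$-closed representative of $\gamma$ is strictly positive, which is the assertion up to $i\partial\bar\partial\theta$.

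The step I expect to be the real obstacle is the one just isolated: in the $d$-closed refinement the separating current is only $\partial\bar\partial$-closed, so it does not a priori carry a pseudo-effective class, and recovering that fact from the inequalities $\int_{\widehat X}\mu^*T\wedge\widetilde A_1\wedge\cdots\wedge\widetilde A_{n-1}\ge 0$ is precisely where Toma's argument together with the duality $\Eff^1(X)^*=\Mov_1(X)$ enters (one should also take some care justifying the pullback $\mu^*T$ of a merely $\partial\bar\partial$-closed positive current, for instance by first replacing $T$ by regularizations). Everything else is soft functional analysis, and the representative produced is automatically smooth, because the separation is carried out directly in the space of smooth forms --- so, in contrast with arguments that first produce a positive current, no final regularization of the representative is needed.
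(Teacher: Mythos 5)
Your argument is correct and is essentially the same as what the paper relies on: the paper offers no proof of this lemma beyond citing Lamari, Toma, and the appendix of Fu--Xiao, all of which rest on exactly the Hahn--Banach separation in the space of smooth $(n-1,n-1)$-forms that you carry out, with the $d$-closed refinement reduced to the cone duality $\Eff^1(X)^*=\Mov_1(X)$ precisely as you describe. You also correctly isolate the one genuinely delicate step (the separating current in the second item being only $\partial\bar\partial$-closed, so that its class must be shown pseudo-effective via the duality), which is the content of the cited Toma/Fu--Xiao results.
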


\section{The main results}\label{secproof}
\subsection{The inverse $\sigma_{n-1}$ equation} We first prove the following result, verifying Conjecture \ref{weakconj} for the inverse $\sigma_{n-1}$ equation.

\begin{thrm}\label{main result}
Let $X$ be a compact K\"ahler manifold of dimension $n$, and let $\omega, \alpha$ be two K\"ahler metrics such that
\begin{align}\label{eq 1}
 \int_X \omega^n-n\omega\wedge \alpha^{n-1} \geq 0,
\end{align}
and
\begin{align}\label{eq 2}
  \int_{E} \omega^{n-1} -\alpha^{n-1} >0
\end{align}
for every prime divisor $E$. Then there exists a smooth $(n-2, n-1)$ form $\psi$ such that
\begin{equation*}
  \omega^{n-1} -\alpha^{n-1} + \partial \psi + \overline{\partial \psi}>0
\end{equation*}
as a smooth $(n-1,n-1)$-form. Furthermore, if $X$ is a projective manifold, then there exists a smooth $(n-2, n-2)$ form $\theta$ such that
\begin{equation*}
  \omega^{n-1} -\alpha^{n-1} + i\partial \bar \partial \theta >0
\end{equation*}
as a smooth $(n-1,n-1)$-form.
\end{thrm}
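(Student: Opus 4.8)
The whole theorem reduces to one statement about cones: writing $\gamma:=\{\omega^{n-1}-\alpha^{n-1}\}\in H^{n-1,n-1}(X,\mathbb{R})$, it suffices to prove that $\gamma$ is an interior point of $\Eff^1(X)^*$. Indeed, the first bullet of Lemma~\ref{thm conedual pointwisepos} then produces a smooth strictly positive representative of $\gamma$ modulo $\partial\psi+\overline{\partial\psi}$, and when $X$ is projective the cone duality $\Eff^1(X)^*=\Mov_1(X)$ of \cite{nystrom2016duality} lets one apply the second bullet of Lemma~\ref{thm conedual pointwisepos} to get the $d$-closed refinement modulo $i\partial\bar\partial\theta$. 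Since $\Eff^1(X)$ is a closed, full-dimensional, pointed convex cone---pointedness since $\pm D$ both pseudo-effective forces the positive current in $D$ to vanish (test against $\omega^{n-1}$)---its dual has nonempty interior and $\gamma$ lies in it exactly when $\gamma\cdot D>0$ for every nonzero $D\in\Eff^1(X)$. By the divisorial Zariski decomposition (Lemma~\ref{zariskidecom}), $D=P(D)+\sum_i a_iE_i$ with $P(D)$ movable, the $E_i$ prime divisors, and $a_i\ge 0$; since $\gamma\cdot[E]=\int_E\omega^{n-1}-\alpha^{n-1}>0$ for every prime divisor $E$ by \eqref{eq 2}, the problem reduces to showing, for movable $(1,1)$-classes $P$, that $\gamma\cdot P\ge 0$, with strict inequality when $P\ne 0$.

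\smallskip
\noindent\emph{Non-strict positivity.} Given a movable $P$ and $\varepsilon>0$, choose a modification $\mu\colon\widehat X\to X$ and a Kähler class $\widehat\omega$ on $\widehat X$ with $\mu_*\widehat\omega=P+\varepsilon\omega$. On $\widehat X$, $\mu^*\omega$ and $\mu^*\alpha$ are nef and the Morse quantity $\vol(\mu^*\omega)-n\,(\mu^*\omega)\cdot\langle(\mu^*\alpha)^{n-1}\rangle$ equals $\int_X\omega^n-n\,\omega\wedge\alpha^{n-1}\ge 0$ by \eqref{eq 1}. Applying Lemma~\ref{morsecurve} to $(1+\delta)\mu^*\omega$ (for which this quantity is strictly positive) and letting $\delta\downarrow 0$ shows that $\mu^*\gamma=(\mu^*\omega)^{n-1}-(\mu^*\alpha)^{n-1}$ lies in $\Eff_1(\widehat X)$, hence is represented by a positive $(n-1,n-1)$-current $T$. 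The projection formula then gives $\gamma\cdot(P+\varepsilon\omega)=\mu^*\gamma\cdot\widehat\omega=\int_{\widehat X}T\wedge\widehat\omega\ge 0$, and $\varepsilon\downarrow 0$ yields $\gamma\cdot P\ge 0$. In particular $\gamma\in\Eff^1(X)^*$, i.e.\ $\gamma$ is a movable $(n-1,n-1)$-class.

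\smallskip
\noindent\emph{Strict positivity.} Set $c:=\bigl(n\int_X\omega\wedge\alpha^{n-1}\bigr)\big/\int_X\omega^n$, so $c\le 1$ by \eqref{eq 1}. Re-running the previous argument with Lemma~\ref{reverseKT} in place of Lemma~\ref{morsecurve}---used on $\widehat X$ with nef class $\widehat\omega$ and pseudo-effective classes $\mu^*\omega,\mu^*\alpha$, then pushed forward---gives $\alpha^{n-1}\cdot(P+\varepsilon\omega)\le c\,\omega^{n-1}\cdot(P+\varepsilon\omega)$, and $\varepsilon\downarrow 0$ yields $\gamma\cdot P\ge(1-c)\,\omega^{n-1}\cdot P$. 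If \eqref{eq 1} is strict then $c<1$ and, as $\omega^{n-1}\cdot P>0$ for a nonzero pseudo-effective $P$, we obtain $\gamma\cdot P>0$ and are done. The delicate case is \eqref{eq 1} being an equality (so $c=1$, and the above gives only $\gamma\cdot P\ge 0$); here I would replace the ordinary reverse Khovanskii--Teissier inequality by the restricted one---Lemma~\ref{thm resKT} and the mixed inequalities it represents from \cite{popovici15} for general compact Kähler $X$, or Lemma~\ref{reverseKT} applied after slicing by an ample hypersurface when $X$ is projective (cf.\ the remark after Lemma~\ref{thm resKT})---and, combining this with \eqref{eq 2} and the description of $\operatorname{int}\Mov_1(X)$ in Lemma~\ref{movablestruct}, still conclude $\gamma\in\operatorname{int}\Eff^1(X)^*$.

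\smallskip
\noindent\emph{Main obstacle.} The crux is the equality case of \eqref{eq 1}: there the classical reverse Khovanskii--Teissier inequality (Lemma~\ref{reverseKT}) degenerates, its constant becoming exactly $1$ and returning only $\gamma\cdot P\ge 0$, so the strict positivity genuinely requires the sharper restricted inequalities of \cite{popovici15,popovici2016sufficientbig}---the subtle transcendental input, whose general form is discussed in Section~\ref{secdisc}. A secondary point worth isolating: the $d$-closed refinement for projective $X$ is not automatic, relying on the cone duality $\Eff^1(X)^*=\Mov_1(X)$ of \cite{nystrom2016duality}, which is not presently available for non-projective Kähler manifolds.
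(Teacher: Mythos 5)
Your overall architecture is the same as the paper's: reduce to showing $\gamma=\{\omega^{n-1}-\alpha^{n-1}\}\in\operatorname{int}\Eff^1(X)^*$, split a pseudo-effective class via the divisorial Zariski decomposition, kill the divisorial part with \eqref{eq 2}, and handle the movable part by pulling back to a modification and invoking the Morse-type criterion (Lemma~\ref{morsecurve}). Your treatment of the case where \eqref{eq 1} is strict is correct and in fact a little cleaner than the paper's: extracting the constant $c=n\int\omega\wedge\alpha^{n-1}/\int\omega^n$ from Lemma~\ref{reverseKT} and concluding $\gamma\cdot P\ge(1-c)\,\omega^{n-1}\cdot P$ gives strictness in one line, whereas the paper reruns the argument for $\omega^{n-1}-(\alpha+\delta\omega)^{n-1}$.

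However, the equality case of \eqref{eq 1} --- which you yourself identify as the crux --- is not actually proved; you name the right tools (Lemma~\ref{thm resKT}, or Lemma~\ref{reverseKT} restricted to an ample hypersurface) but never show how they yield $\gamma\cdot P>0$, and the phrase ``still conclude $\gamma\in\operatorname{int}\Eff^1(X)^*$'' papers over the only genuinely delicate step. The paper's mechanism is a specific two-parameter perturbation: one seeks a K\"ahler class $H$ and constants $t_1,t_2,\varepsilon>0$ with $(\omega-\varepsilon t_1H)^n-n(\omega-\varepsilon t_1H)\cdot(\alpha-\varepsilon t_2H)^{n-1}>0$ (so the strict case applies to the perturbed pair) while $\bigl((\omega-\varepsilon t_1H)^{n-1}-(\alpha-\varepsilon t_2H)^{n-1}\bigr)\cdot P<(\omega^{n-1}-\alpha^{n-1})\cdot P$; expanding to first order in $\varepsilon$, the existence of such $t_1,t_2$ reduces exactly to
\begin{equation*}
(n-1)(H\cdot\alpha^{n-2}\cdot\omega)(H\cdot\omega^{n-2}\cdot P)\ \geq\ (H\cdot\alpha^{n-2}\cdot P)(H\cdot\omega^{n-1}),
\end{equation*}
which is supplied by Lemma~\ref{thm resKT} with the specific choice $H=\omega$ in the general K\"ahler case, or by Lemma~\ref{reverseKT} applied to the restrictions to an irreducible ample divisor $H$ in the projective case. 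Without this reduction (or some substitute), your sketch does not close. Two smaller points: Lemma~\ref{movablestruct}, which you invoke here, plays no role in the paper's proof of this theorem --- interiority of $\gamma$ in the dual cone is just the statement $\gamma\cdot D>0$ for all nonzero $D\in\Eff^1(X)$, as you note earlier; and your reduction to movable $P$ should be careful that Lemma~\ref{reverseKT} is applied upstairs with $N=\widehat\omega$ and then pushed forward, which you do state but which requires $\mu^*\omega\cdot\langle(\mu^*\alpha)^{n-1}\rangle=\omega\cdot\alpha^{n-1}$ (true here since both classes are pullbacks of nef classes).
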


\begin{rmk}
In some sense, this result can be seen as the $(n-1, n-1)$ class version of Demailly-P\u{a}un's theorem. The positivity we get in Theorem \ref{main result} means that, instead of finding the conjectural K\"ahler metric $\omega'$ in the K\"ahler class $\{\omega\}$, we get a special (Gauduchon or balanced) Hermitian metric $\widetilde{\omega}$ such that $\widetilde{\omega}^{n-1}$ is in the class $\{\omega^{n-1}\}$ and satisfy $\widetilde{\omega}^{n-1}-\alpha^{n-1}>0$ as a smooth $(n-1, n-1)$ form.
\end{rmk}

\begin{proof}[Proof of Theorem \ref{main result}]
To simplify the notations, we denote the corresponding K\"ahler class by the same symbol as the K\"ahler metric.
It has been already noted in \cite{xiao16sigmarmk} that, under the assumptions in Theorem \ref{main result}, the class $\omega^{n-1} - \alpha^{n-1}$ must be a point of $\Eff^1 (X)^*$. For completeness, in the following we will recall some arguments.\\

We first consider the case when $\omega^n - n\omega\cdot \alpha^{n-1}>0$. In order to prove that $\omega^{n-1} - \alpha^{n-1}$ is an interior point of $\Eff^1 (X)^*$, we need to verify the following statement: the inequality
\begin{equation}\label{eq interior}
(\omega^n - \alpha^{n-1})\cdot \beta >0
\end{equation}
holds for any non-zero pseudo-effective class $\beta\in \Eff^1 (X)$.
By divisorial Zariski decomposition (Lemma \ref{zariskidecom}), $\beta$ can be decomposed as $$\beta = P(\beta)+N(\beta),$$ where $P(\beta)$ is movable and $N(\beta)$ is an effective divisor class.  Firstly, note that by the numerical condition for prime divisors, we always have
\begin{equation*}
  (\omega^n - \alpha^{n-1})\cdot N(\beta)\geq 0,
\end{equation*}
and the above inequality is strict if $N(\beta)\neq 0$. On the other hand, we claim:
\begin{equation*}
  (\omega^n - \alpha^{n-1})\cdot P(\beta)\geq 0,
\end{equation*}
and the inequality is strict whenever $P(\beta)\neq 0$. Then it is clear that (\ref{eq interior}) follows from these two statements.

Now we prove our claim. Assume $P(\beta)\neq 0$. Since $P(\beta)$ is movable, for any $\varepsilon>0$ there exists a modification $\mu: \widehat{X} \rightarrow X$ and a K\"ahler class $\widehat{\omega}$ on $\widehat{X}$ such that
\begin{equation*}
  \mu_* \widehat{\omega} = P(\beta)+\varepsilon \omega.
\end{equation*}
We estimate the intersection number $(\omega^{n-1} - \alpha^{n-1})\cdot \mu_* \widehat{\omega}$.  By the numerical condition on $X$, we have $$\mu^* \omega^n -n \mu^*\omega \cdot (\mu^* \alpha)^{n-1}>0.$$
Applying Lemma \ref{morsecurve} yields a strictly positive $(n-1, n-1)$ current in the class $(\mu^* \omega)^{n-1}-(\mu^* \alpha)^{n-1}$. This implies
\begin{align*}
  (\omega^{n-1} - \alpha^{n-1})\cdot (P(\beta)+\varepsilon \omega)&=(\omega^{n-1} - \alpha^{n-1})\cdot \mu_* \widehat{\omega}\\
  &=(\mu^* \omega^{n-1} - \mu^* \alpha^{n-1})\cdot \widehat{\omega}\\
  &>0.
\end{align*}
Let $\varepsilon \downarrow 0$, we conclude that $(\omega^{n-1} - \alpha^{n-1})\cdot P(\beta) \geq 0$. Note that, in the proof of this inequality, we only use the numerical condition on $X$. Since we have assumed $\omega^n -n \omega\cdot \alpha^{n-1}>0$, for $\delta>0$ small enough, we also have
\begin{equation*}
  \omega^n -n \omega\cdot (\alpha+\delta \omega)^{n-1}>0.
\end{equation*}
Applying the same argument to the class $\omega^{n-1}-(\alpha+\delta \omega)^{n-1}$, we get
\begin{equation*}
 (\omega^{n-1}-(\alpha+\delta \omega)^{n-1})\cdot P(\beta)\geq 0,
\end{equation*}
which implies $(\omega^{n-1} - \alpha^{n-1})\cdot P(\beta) > 0$ (since $P(\beta)\neq 0$). \\

Next we consider the case when $\omega^n - n\omega\cdot \alpha^{n-1}=0$. Then for $\delta>0$ small enough, the numerical inequalities are strict for the classes $\omega$ and $(1-\delta)\alpha$. Using the same argument as in the first case and letting $\delta$ tend to $0$, we also get that
\begin{equation*}
  (\omega^{n-1} - \alpha^{n-1})\cdot \beta \geq 0
\end{equation*}
for any pseudo-effective class $\beta$. To conclude that $\omega^{n-1} - \alpha^{n-1}$ is an interior point, we also need to verify
\begin{equation*}
  (\omega^{n-1} - \alpha^{n-1})\cdot P(\beta) > 0
\end{equation*}
whenever $P(\beta)\neq 0$.

To this end, our strategy is to find a K\"ahler class $H$, and three strictly positive constants $t_1, t_2, \varepsilon$ ($\varepsilon$ will be sufficiently small) such that
\begin{itemize}
  \item $(\omega - \varepsilon t_1 H)^n -n (\omega - \varepsilon t_1 H)\cdot (\alpha - \varepsilon t_2 H)^{n-1} >0$,
  \item $\left((\omega - \varepsilon t_1 H)^{n-1} - (\alpha - \varepsilon t_2 H)^{n-1}\right)\cdot P(\beta)< (\omega^{n-1}-\alpha^{n-1})\cdot P(\beta)$.
\end{itemize}
By the proof for the first case, the first inequality implies that
\begin{equation*}
  \left((\omega - \varepsilon t_1 H)^{n-1} - (\alpha - \varepsilon t_2 H)^{n-1}\right)\cdot P(\beta) \geq 0.
\end{equation*}
Thus, if we also have the second inequality, then we get the desired result.

We claim that there exist a K\"ahler class $H$, three strictly positive constants $t_1, t_2, \varepsilon$ satisfying both inequalities. Note that, for $\varepsilon>0$ sufficiently small we have
\begin{align*}
  &(\omega - \varepsilon t_1 H)^n -n (\omega - \varepsilon t_1 H)\cdot (\alpha - \varepsilon t_2 H)^{n-1}\\
  &=\omega^n - n \varepsilon t_1 H \cdot \omega^{n-1} - n \omega\cdot \alpha^{n-1} + n(n-1)\varepsilon t_2 H\cdot \omega \cdot \alpha^{n-2} + n\varepsilon t_1 H\cdot \alpha^{n-1} + O(\varepsilon^2)\\
  &=n\varepsilon \left((n-1)t_2 H\cdot \omega \cdot \alpha^{n-2} - t_1 H \cdot (\omega^{n-1}-\alpha^{n-1}) \right)+ O(\varepsilon^2),
\end{align*}
where the last equality follows since $\omega^n -n\omega\cdot \alpha^{n-1}=0$. Similarly, we also have
\begin{align*}
  &\left((\omega - \varepsilon t_1 H)^{n-1} - (\alpha - \varepsilon t_2 H)^{n-1}\right)\cdot P(\beta)\\
  &=(\omega^{n-1}-\alpha^{n-1})\cdot P(\beta) + (n-1)\varepsilon (t_2 H\cdot \alpha^{n-2}-t_1 H\cdot \omega^{n-2})\cdot P(\beta) + O(\varepsilon^2).
\end{align*}
Let $t=t_1 /t_2$.
In the case when $H\cdot(\omega^{n-1}-\alpha^{n-1})=0$ (which is impossible as it will lead contradiction), it is clear
that we only need to take $t_1, t_2 \in (0,1)$ such that $t$ is sufficiently large.

In the case when $H\cdot(\omega^{n-1}-\alpha^{n-1})>0$, then it is easy to see that the existence of $H, t_1, t_2, \varepsilon$ is equivalent to the following inequality
\begin{align}
  (n-1)(H\cdot \alpha^{n-2}\cdot \omega)(H\cdot \omega^{n-2}\cdot P(\beta))> (H\cdot \alpha^{n-2}\cdot P(\beta))(H \cdot (\omega^{n-1}-\alpha^{n-1})).
\end{align}

Since $H\cdot \alpha^{n-2}\cdot P(\beta)>0$ and $H\cdot \alpha^{n-1}>0$, it is sufficient to prove that
\begin{align}\label{eq restrictedKT}
  (n-1)(H\cdot \alpha^{n-2}\cdot \omega)(H\cdot \omega^{n-2}\cdot P(\beta))\geq (H\cdot \alpha^{n-2}\cdot P(\beta))(H \cdot \omega^{n-1}).
\end{align}

We claim that, in order to prove the inequality (\ref{eq restrictedKT}) for any movable class $P(\beta)$, it is sufficient to establish the following inequality:
\begin{align}\label{eq restrictedKT1}
  (n-1)(H\cdot \alpha^{n-2}\cdot \omega)(H\cdot \omega^{n-2}\cdot N)\geq (H\cdot \alpha^{n-2}\cdot N)(H \cdot \omega^{n-1}).
\end{align}
for any K\"ahler or nef class $N$. This is clear, since by taking limits we can assume $P(\beta)=\nu_* \widehat{\omega}$ for some K\"ahler classes upstairs. \\

Now we prove that there always exists some K\"ahler classes such that (\ref{eq restrictedKT1}) holds.

In the case when $X$ is projective, we can take $H$ to be the class of any irreducible ample divisor. Indeed, if $H$ is an ample divisor, then (\ref{eq restrictedKT1}) is equivalent to
\begin{equation}\label{eq proj}
 (n-1) (\alpha_H ^{n-2}\cdot \omega_H)(\omega_H ^{n-2}\cdot N_H)\geq \omega_H^{n-1} (\alpha_H ^{n-2}\cdot N_H).
\end{equation}
And this just follows from Lemma \ref{reverseKT}, by considering the restricted classes on $H$.

In the case when $X$ is not projective, we expect that (\ref{eq restrictedKT1}) also holds for any K\"ahler classes (see Section \ref{secdisc} for more discussions). In our setting, by Lemma \ref{thm resKT} we observe that it is sufficient to take $H=\omega$.\\

Thus under the assumptions in Theorem \ref{main result}, the class $\omega^{n-1}-\alpha^{n-1}$ must be an interior point of $\Eff^1 (X)^*$. Applying Lemma \ref{thm conedual pointwisepos} implies the existence of a smooth $(n-2, n-1)$ form $\psi$ such that
\begin{equation*}
  \omega^{n-1} -\alpha^{n-1} + \partial \psi + \overline{\partial \psi}>0
\end{equation*}
as a smooth $(n-1,n-1)$-form. And if $X$ is a projective manifold, then Lemma \ref{thm conedual pointwisepos} implies the existence of a smooth $(n-2, n-2)$ form $\theta$ such that
\begin{equation*}
  \omega^{n-1} -\alpha^{n-1} + i\partial \bar \partial \theta >0
\end{equation*}
as a smooth $(n-1,n-1)$-form. This finishes the proof of Theorem \ref{main result}.
\end{proof}

\subsection{K\"ahler 3-folds} As an application, Theorem \ref{main result} and \cite{xiao16sigmarmk} yield a solution to Conjecture \ref{weakconj} for K\"ahler 3-folds. Note that, we only need to verify the case when $k=1$.

\begin{cor}\label{cor}
Let $X$ be a compact K\"ahler manifold of dimension 3. Assume that $\omega, \alpha$ are two K\"ahler metrics satisfying
\begin{equation*}
  \int_X \omega^3 -3 \omega^2\wedge \alpha \geq 0,
\end{equation*}
and
\begin{equation*}
  \int_V \omega^p - p \omega^{p-1}\wedge \alpha >0
\end{equation*}
for every irreducible analytic subset $V$ with $\dim V = p$, $p=1, 2$. Then there exists a smooth $(1, 2)$ form $\psi$ such that
\begin{equation*}
  \omega^{2} -2\omega\wedge \alpha + \partial \psi + \overline{\partial \psi}>0
\end{equation*}
as a smooth $(2,2)$-form. Furthermore, if $X$ is a projective manifold, then there exists a smooth $(1, 1)$ form $\theta$ such that
\begin{equation*}
  \omega^{2} -2\omega\wedge \alpha + i\partial \bar \partial \theta >0
\end{equation*}
as a smooth $(2,2)$-form.
\end{cor}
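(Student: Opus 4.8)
The plan is to deduce this $k=1$, $n=3$ statement from the already-established case $k=n-1$ of Theorem \ref{main result}, applied not to $(\omega,\alpha)$ but to a suitably chosen pair of K\"ahler metrics. The point, special to $n=3$, is the elementary identity
\[
\omega^2-2\omega\wedge\alpha=(\omega-\alpha)^2-\alpha^2,
\]
which exhibits the relevant $(2,2)$-form as a difference of squares of $(1,1)$-classes, i.e.\ precisely the shape to which Theorem \ref{main result} applies with ``$n-1$'' equal to $2$.

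First I would invoke \cite{xiao16sigmarmk}: under the numerical hypotheses of the Corollary, the class $\gamma:=\{\omega-\alpha\}$ is K\"ahler. Fix a K\"ahler metric $\eta\in\gamma$. Since $\eta$ and $\omega-\alpha$ are cohomologous $d$-closed $(1,1)$-forms, say $\omega-\alpha-\eta=i\partial\bar\partial f$, their squares satisfy $(\omega-\alpha)^2-\eta^2=i\partial\bar\partial\big(f(\omega-\alpha+\eta)\big)$ because $\omega-\alpha+\eta$ is $d$-closed; hence $\eta^2-\alpha^2$ and $\omega^2-2\omega\wedge\alpha$ differ by $i\partial\bar\partial$ of a smooth $(1,1)$-form, equivalently by a term $\partial\psi_0+\overline{\partial\psi_0}$. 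It therefore suffices to prove the conclusion with $\omega^2-2\omega\wedge\alpha$ replaced by $\eta^2-\alpha^2$, the correction term being absorbed into $\psi$ (resp.\ $\theta$).

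It remains to check that $(\eta,\alpha)$ satisfies the hypotheses of Theorem \ref{main result}. The global condition (\ref{eq 1}) for $(\eta,\alpha)$ is $\int_X\eta^3-3\eta\wedge\alpha^2\geq 0$, and expanding the cube gives
\[
\int_X\eta^3-3\eta\wedge\alpha^2=\int_X(\omega-\alpha)^3-3(\omega-\alpha)\wedge\alpha^2=\int_X\big(\omega^3-3\omega^2\wedge\alpha\big)+2\int_X\alpha^3\ \geq\ 2\int_X\alpha^3\ >\ 0 ,
\]
using $\int_X\omega^3-3\omega^2\wedge\alpha\geq 0$. In particular the inequality is strict, so in the proof of Theorem \ref{main result} only the first case is needed --- the one relying on divisorial Zariski decomposition and the Morse-type criterion (Lemma \ref{morsecurve}), not on the restricted reverse Khovanskii--Teissier inequality. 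The prime-divisor condition (\ref{eq 2}) for $(\eta,\alpha)$ is $\int_E\eta^2-\alpha^2>0$ for every prime divisor $E$, and since $\eta^2-\alpha^2=\omega^2-2\omega\wedge\alpha$ in cohomology this is exactly the $p=2$ instance of the numerical hypothesis. Theorem \ref{main result} then yields a smooth $(1,2)$-form $\psi$ with $\eta^2-\alpha^2+\partial\psi+\overline{\partial\psi}>0$, and, when $X$ is projective, a smooth $(1,1)$-form $\theta$ with $\eta^2-\alpha^2+i\partial\bar\partial\theta>0$; combined with the previous paragraph this gives the Corollary.

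The step I expect to carry the real weight is the input from \cite{xiao16sigmarmk}: that the numerical hypotheses alone --- without assuming solvability of (\ref{eq sigmak}) --- already force $\{\omega-\alpha\}$ to be a K\"ahler class, a Demailly--P\u{a}un-type assertion, and this is where the $p=1$ hypothesis on curves enters. If one only knew $\{\omega-\alpha\}$ to be nef, the same strategy still applies, but one cannot quote Theorem \ref{main result} as a black box; instead one reruns its first-case proof with $\omega$ replaced by the nef class $\eta$, noting that the only role of a genuine K\"ahler class there is to realize a movable $(1,1)$-class on a birational model as a push-forward of a K\"ahler class --- for which the fixed K\"ahler metric $\omega$ on $X$ still serves --- while the Morse-type step and the prime-divisor estimate go through using the same computations.
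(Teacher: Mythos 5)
Your proposal is correct and follows essentially the same route as the paper: rewrite $\omega^2-2\omega\wedge\alpha$ as $(\omega-\alpha)^2-\alpha^2$, use \cite{xiao16sigmarmk} to see that $\{\omega-\alpha\}$ is K\"ahler, check the two numerical hypotheses via $(\omega-\alpha)^3-3(\omega-\alpha)\cdot\alpha^2=(\omega^3-3\omega^2\cdot\alpha)+2\alpha^3$, and apply Theorem \ref{main result} to the pair $\{\omega-\alpha\}$, $\{\alpha\}$. Your extra care in tracking the $i\partial\bar\partial$-exact discrepancy between $\eta^2-\alpha^2$ and the actual form $\omega^2-2\omega\wedge\alpha$ is a welcome refinement of a step the paper handles implicitly at the level of Bott--Chern classes.
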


\begin{proof}
We use the same symbol to denote a $(1,1)$ form and its Bott-Chern class. Note that
\begin{equation*}
  \omega^2 -2\omega\cdot \alpha = (\omega-\alpha)^2-\alpha^2,
\end{equation*}
and
\begin{equation*}
  (\omega-\alpha)^3-3(\omega-\alpha)\cdot \alpha^2 =(\omega^3 -3\omega^2 \cdot \alpha) + 2 \alpha^3.
\end{equation*}
The numerical conditions imply
\begin{equation*}
  \int_X (\omega-\alpha)^3-3(\omega-\alpha)\cdot \alpha^2 >0
\end{equation*}
and
\begin{equation*}
  \int_E (\omega-\alpha)^2-\alpha^2 >0
\end{equation*}
for every prime divisor $E$.

By \cite{xiao16sigmarmk}, the class $\omega-\alpha$ is a K\"ahler class. Applying Theorem \ref{main result} to the K\"ahler classes $\omega-\alpha$ and $\alpha$, we get the desired pointwise positivity.
\end{proof}

\begin{rmk}
It would be interesting to see if the proof of Corollary \ref{cor} can be generalized to higher dimension. Unfortunately, there is certain difficulty even for $n=4$. More precisely, under the assumptions in Conjecture \ref{conj LS15 sigma k flow}, we want to study the positivity of
\begin{equation*}
  \omega^3 - 3 \omega^2 \cdot \alpha = (\omega-\alpha)^3 -(3\omega\cdot \alpha^2 -\alpha^3).
\end{equation*}
By \cite{xiao16sigmarmk}, the class $\omega-\alpha$ is K\"ahler, which implies that $3\omega\cdot \alpha^2 -\alpha^3=\alpha^2 \cdot (3\omega -\alpha)$ is a complete intersection class. At least when $X$ is projective, applying the refined structure of movable cone in \cite{lehmann2016positiivty} (see Lemma \ref{movablestruct}) implies
\begin{equation*}
  3\omega\cdot \alpha^2 -\alpha^3 = \langle L^3 \rangle
\end{equation*}
for a unique big and movable class $L$. By Morse type bigness criterion for movable $(n-1, n-1)$ classes (see Lemma \ref{morsecurve}), if 
\begin{align*}
  &(\omega-\alpha)^4 -4 (\omega-\alpha)\cdot (3\omega\cdot \alpha^2 -\alpha^3)\\ 
  &= (\omega-\alpha)^4 -4 (\omega-\alpha)\cdot \langle L^3 \rangle >0,
\end{align*}
then the class $\omega^3 - 3 \omega^2 \cdot \alpha$ must contain a strictly positive $(3, 3)$-current, and as the following proof for Theorem \ref{main result1}, this will yield a solution to Conjecture \ref{weakconj} in this case. However, note that
\begin{align*}
  &(\omega-\alpha)^4 -4 (\omega-\alpha)\cdot (3\omega\cdot \alpha^2 -\alpha^3)\\
  &= (\omega^4 -4\omega^3 \cdot \alpha)+ (12 \omega\cdot \alpha^3 -6 \omega^2 \cdot \alpha^2 - 3 \alpha^4)
\end{align*}
where the second term $12 \omega\cdot \alpha^3 -6 \omega^2 \cdot \alpha^2 - 3 \alpha^4$ may be strictly negative, thus if $\omega^4 -4\omega^3 \cdot \alpha =0$ then we may get a negative numerical condition on $X$.
\end{rmk}

\subsection{Differences of movable curve classes} In the algebraic geometry setting, we usually need to deal with movable $(n-1,n-1)$ classes rather than complete intersections. Let $X$ be a projective manifold, and let $\gamma_1, \gamma_2$ be two (transcendental) movable $(n-1,n-1)$ classes, we ask whether there is a similar intersection-theoretic criterion, as Theorem \ref{main result}, such that $\gamma_1 - \gamma_2$ is an interior point of $\Mov_1 (X)$. This might be applied to the study of stability conditions of vector bundles with respect to (transcendental) movable classes.

By the refined structure of $\Mov_1 (X)$ (see \cite{lehmann2016positiivty}), every interior point in this cone can be written as the positive product of a unique big movable $(1,1)$ class -- which is called the $(n-1)$th root, it is natural to ask whether the above result can be extended to pseudo-effective classes by using positive products. In this direction, we have:

\begin{thrm}\label{main result1}
Let $X$ be a projective manifold of dimension $n$, and let $\gamma_1, \gamma_2$ be two movable $(n-1, n-1)$ classes. Assume that $\gamma_1 = \langle\omega^{n-1}\rangle, \gamma_2=\langle\alpha^{n-2} \rangle$ for two big classes $\omega, \alpha$ satisfying
\begin{align}\label{eq 3}
 \vol(\omega)-n\omega\cdot \gamma_2 \geq 0
\end{align}
and
\begin{align}\label{eq 4}
  (\gamma_1 -\gamma_2)\cdot E \geq 0
\end{align}
for every prime divisor class $E$. Then we have:
\begin{itemize}
  \item $\gamma_1 -\gamma_2$ must be a movable $(n-1, n-1)$ class;
  \item Furthermore, if we assume that the augmented base locus of $\alpha$, $\mathbb{B}_{+} (\alpha)$, satisfies $\codim \mathbb{B}_{+} (\alpha)\geq 2$ and the inequalities (\ref{eq 3}), (\ref{eq 4}) are strict, then $\gamma_1 -\gamma_2$ must be an interior point of $\Mov_1 (X)$, or equivalently, its Bott-Chern class contains a $d$-closed smooth strictly positive $(n-1, n-1)$ form.
\end{itemize}
\end{thrm}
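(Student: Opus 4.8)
The plan is to run the same strategy as in the proof of Theorem~\ref{main result}, replacing powers of K\"ahler classes by positive products, and to read off both conclusions from the transcendental cone duality $\Eff^1(X)^* = \Mov_1(X)$. Write $\gamma := \gamma_1 - \gamma_2$ and fix an auxiliary K\"ahler class $h$ on $X$. I will repeatedly use that volume is a birational invariant, that $\mu^*\langle\alpha^{n-1}\rangle = \langle(\mu^*\alpha)^{n-1}\rangle$ for a modification $\mu$ (Lemma~\ref{pullback}), and the projection formula $\mu^* a \cdot \mu^* b = a\cdot b$. Note also that, $\alpha$ being big, $\gamma_2 = \langle\alpha^{n-1}\rangle \neq 0$ and $\omega\cdot\gamma_2 > 0$ (as $\omega$ contains a K\"ahler current).

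For the first bullet I would show $\gamma\cdot\beta \ge 0$ for every $\beta \in \Eff^1(X)$, which by the duality gives $\gamma \in \Eff^1(X)^* = \Mov_1(X)$. Decompose $\beta = P(\beta) + N(\beta)$ by the divisorial Zariski decomposition (Lemma~\ref{zariskidecom}); since $N(\beta)$ is an effective combination of prime divisors, $\gamma\cdot N(\beta) \ge 0$ by hypothesis~(\ref{eq 4}). For the movable part, scaling $\alpha$ to $(1-\delta)\alpha$ turns~(\ref{eq 3}) into the \emph{strict} inequality $\vol(\omega) - n(1-\delta)^{n-1}\,\omega\cdot\gamma_2 > 0$ for each $\delta\in(0,1)$; choosing a modification $\mu\colon\widehat X\to X$ with a K\"ahler class $\widehat\omega$ such that $\mu_*\widehat\omega = P(\beta)+\varepsilon h$, pulling this inequality back, applying the Morse-type criterion (Lemma~\ref{morsecurve}) to produce a strictly positive current in $\langle(\mu^*\omega)^{n-1}\rangle - (1-\delta)^{n-1}\langle(\mu^*\alpha)^{n-1}\rangle$, pairing with $\widehat\omega$ and pushing forward, I obtain $\bigl(\gamma_1 - (1-\delta)^{n-1}\gamma_2\bigr)\cdot(P(\beta)+\varepsilon h) \ge 0$; letting $\varepsilon\downarrow 0$ and then $\delta\downarrow 0$ gives $\gamma\cdot P(\beta)\ge 0$, and combining with the divisorial part finishes this bullet.

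For the second bullet, assume~(\ref{eq 3}) and~(\ref{eq 4}) are strict and $\codim\mathbb{B}_+(\alpha)\ge 2$; by Lemma~\ref{movablestruct} the latter means $\gamma_2$ is an interior point of $\Mov_1(X)$, hence $\gamma_2\cdot\beta > 0$ for every nonzero movable $(1,1)$ class $\beta$. The heart of the argument is the claim that $\gamma\cdot P(\beta) > 0$ for every nonzero movable $P(\beta)$. To prove it, take $\mu,\widehat\omega$ with $\mu_*\widehat\omega = P(\beta)+\varepsilon h$ as before and apply the reverse Khovanskii--Teissier inequality (Lemma~\ref{reverseKT}) on $\widehat X$ to the pseudo-effective classes $\mu^*\omega,\mu^*\alpha$ and the nef class $\widehat\omega$; using Lemma~\ref{pullback} and the projection formula this becomes
\[
  n\,\bigl[(P(\beta)+\varepsilon h)\cdot\gamma_1\bigr]\,\bigl[\omega\cdot\gamma_2\bigr] \;\ge\; \vol(\omega)\,\bigl[(P(\beta)+\varepsilon h)\cdot\gamma_2\bigr].
\]
Since the strict form of~(\ref{eq 3}) makes $c_0 := \vol(\omega)/(n\,\omega\cdot\gamma_2) - 1$ strictly positive, this rearranges to $(P(\beta)+\varepsilon h)\cdot\gamma \ge c_0\,(P(\beta)+\varepsilon h)\cdot\gamma_2$; letting $\varepsilon\downarrow 0$ and invoking that $\gamma_2$ is interior in $\Mov_1(X)$ yields $P(\beta)\cdot\gamma \ge c_0\,P(\beta)\cdot\gamma_2 > 0$. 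With the claim in hand, for any nonzero $\beta = P(\beta)+N(\beta)$ in $\Eff^1(X)$ we get $\gamma\cdot\beta > 0$ --- from the claim if $N(\beta)=0$, and from $\gamma\cdot P(\beta)\ge 0$ (first bullet) together with the strict~(\ref{eq 4}) if $N(\beta)\ne 0$ --- so $\gamma$ lies in the interior of $\Eff^1(X)^* = \Mov_1(X)$, and Lemma~\ref{thm conedual pointwisepos} then represents it by a $d$-closed smooth strictly positive $(n-1,n-1)$ form.

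The step I expect to be the main obstacle is precisely this upgrade from $\ge 0$ to $>0$ on the movable part: the naive limit $\varepsilon\downarrow 0$ only preserves non-strict positivity, and it is the \emph{quantitative} reverse Khovanskii--Teissier inequality, together with the fact that $\gamma_2$ is an interior point of $\Mov_1(X)$ --- which is exactly where the hypothesis $\codim\mathbb{B}_+(\alpha)\ge 2$ gets consumed --- that forces the strict inequality to survive the limit. The appeal to the transcendental duality $\Eff^1(X)^* = \Mov_1(X)$ is also what confines the whole argument to projective $X$.
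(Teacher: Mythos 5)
Your proof is correct and follows essentially the same route as the paper: divisorial Zariski decomposition to split off the divisorial part, reduction of the movable part to K\"ahler classes on modifications via Lemma~\ref{pullback} and the projection formula, the reverse Khovanskii--Teissier/Morse-type machinery to compare $\gamma_1$ and $\gamma_2$ against such classes, and finally the cone duality $\Eff^1(X)^*=\Mov_1(X)$ together with Lemma~\ref{thm conedual pointwisepos}. The only cosmetic differences are that for the first bullet the paper applies Lemma~\ref{reverseKT} directly on $\widehat X$ where you invoke Lemma~\ref{morsecurve}, and for the second bullet the paper perturbs $\alpha$ to $(1+\delta)\alpha$ where you extract the quantitative constant $c_0$; both variants consume the hypothesis $\codim \mathbb{B}_{+}(\alpha)\ge 2$ through the strict positivity of $\gamma_2$ against nonzero movable classes from Lemma~\ref{movablestruct} in exactly the same way.
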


\begin{rmk}
In the first statement, the interesting part in Theorem \ref{main result1} is that the classes $\gamma_1, \gamma_2$ are transcendental, that is, they are not given by curve classes. In the case when they are given by curve classes, by \cite{BDPP13} the numerical condition on prime divisors is already sufficient to obtain the result.
It should also be noted that, without the additional assumption on $\alpha$ in the second statement of Theorem \ref{main result1}, even if the inequalities in (\ref{eq 3}) and (\ref{eq 4}) are strict, it is still possible that the class $\gamma_1 -\gamma_2$ lies on the boundary of $\Mov_1 (X)$. From its proof below, we will see that the assumption on $\alpha$ can be weakened a little bit.
\end{rmk}

\begin{proof}[Proof of Theorem \ref{main result1}]
The proof is similar to Theorem \ref{main result}, so we only give a brief description.

For the first statement, as the proof in Theorem \ref{main result}, by applying divisorial Zariski decomposition, it is enough to show that
$$(\langle \omega^{n-1}\rangle-\langle \alpha^{n-1}\rangle)\cdot \beta \geq 0$$
for any movable class $\beta$.
Without loss of generality, we can assume that $\beta=\mu_* \widehat{\omega}$ for some modification and some K\"ahler class $\widehat{\omega}$ upstairs. By Lemma \ref{pullback}, we need to verify
\begin{equation}\label{eq pullback}
  (\langle (\mu^*\omega)^{n-1}\rangle-\langle (\mu^*\alpha)^{n-1}\rangle)\cdot \widehat{\omega} \geq 0.
\end{equation}

Applying the reverse Khovanskii-Teissier inequality for pseudo-effective classes (Lemma \ref{reverseKT}) to $\mu^*\omega, \mu^*\alpha$ yields
\begin{equation*}
  (N\cdot \langle \mu^*\omega^{n-1}\rangle)(n\mu^*\omega\cdot \langle \mu^*\alpha^{n-1}\rangle)\geq \vol(\mu^*\omega)(N\cdot \langle \mu^*\alpha^{n-1}\rangle)
\end{equation*}
for any nef class $N$. Since $\vol(\mu^*\omega)-n\mu^*\omega\cdot \langle \mu^*\alpha^{n-1}\rangle\geq 0$ and $\mu^*\omega\cdot \langle \mu^*\alpha^{n-1}\rangle>0$, we get that
\begin{equation*}
  (\langle \mu^*\omega^{n-1}\rangle-\langle \mu^*\alpha^{n-1}\rangle)\cdot N \geq 0,
\end{equation*}
which implies $(\langle (\mu^*\omega)^{n-1}\rangle-\langle (\mu^*\alpha)^{n-1}\rangle)\cdot \widehat{\omega} \geq 0$. This finishes the proof of (\ref{eq pullback}) and the first statement.

For the second statement, since $\langle \omega^n\rangle-n\omega \cdot \langle \alpha^{n-1}\rangle>0$, for $\delta>0$ small enough we have
$$\langle \omega^n\rangle-n\omega \cdot \langle (\alpha+\delta \alpha)^{n-1}\rangle>0.$$
For any non zero movable class $\beta$, applying the same argument as above to $\omega, (1+\delta)\alpha$ implies
\begin{align*}
  (\langle \omega^{n-1}\rangle-\langle \alpha^{n-1}\rangle)\cdot \beta > (\langle \omega^{n-1}\rangle-\langle (\alpha+\delta \alpha)^{n-1}\rangle)\cdot \beta \geq 0,
\end{align*}
where the first inequality follows from the assumption on $\alpha$ and Lemma \ref{movablestruct}. By similar argument in the proof of Theorem \ref{main result}, we then conclude the second statement.

This finishes the proof of Theorem \ref{main result1}.
\end{proof}

\begin{rmk}
By the above proof, we know that the only additional assumption on $\alpha$ (or $\gamma_2$) is that $\gamma_2\cdot \beta>0$ for any non zero movable $(1,1)$ class.
\end{rmk}

\section{Miscellaneous discussions}\label{secdisc}

\subsection{General restricted ``reverse Khovanskii-Teissier inequalities''} Let $X$ be a compact K\"ahler manifold of dimension $n$, and let $H, \omega, \alpha$ be any K\"ahler classes on $X$.
In the proof of Theorem \ref{main result}, we ask if we have the following inequality
\begin{equation*}
  (n-1)(H\cdot \alpha^{n-2}\cdot \omega)(H\cdot \omega^{n-2}\cdot \beta)\geq (H\cdot \omega^{n-1})(H\cdot \alpha^{n-2}\cdot \beta).
\end{equation*}
More generally, inspired by \cite[Remark 3.1]{xiao13weakmorse} and the stronger results in \cite{popovici2016sufficientbig}, we can ask the following:

\begin{itemize}
  \item Let $X$ be a compact K\"ahler manifold, and let $H, \omega, \alpha, \beta$ be any K\"ahler classes on $X$, then do we have
\begin{equation}\label{eq resKT general}
(H^{n-k-l}\cdot \alpha^{k}\cdot \omega^l)(H^{n-k-l}\cdot \omega^{k}\cdot \beta^l)\geq \frac{k!l!}{(k+l)!}(H^{n-k-l}\cdot \omega^{k+l})(H^{n-k-l}\cdot \alpha^{k}\cdot \beta^l)?
\end{equation}
\end{itemize}


\begin{rmk}
We may expect similar inequality by replacing $H^{n-k-l}$ by an arbitrary positive $(n-k-l, n-k-l)$ class. And we may also replace $\alpha^k$ (or $\beta^l$) by any (smooth) positive $(k,k)$ (or $(l,l)$) class. Indeed, the important assumption is the positivity on $\omega$: it is possible to assume that, $\omega$ is a $(k+l)$-subharmonic class; see the discussion below.
\end{rmk}

By using the method of \cite{popovici2016sufficientbig} (see e.g. \cite[Section 7]{popovici15}, \cite[Section 5.2]{lehmann2016correspondences}), it is clear that we have:

\begin{prop}\label{general reverseKT variety}
The inequality (\ref{eq resKT general}) is true whenever the class $H^{n-k-l}=\{[V]\}$, where $[V]$ is the integration current of an irreducible subvariety.
\end{prop}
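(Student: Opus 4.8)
The plan is to reduce Proposition~\ref{general reverseKT variety} to the already-known case of Khovanskii--Teissier type inequalities by restriction to the subvariety $V$. Write $H^{n-k-l} = \{[V]\}$ where $V$ is an irreducible subvariety of dimension $k+l$ (if $V$ is singular, first pass to a resolution $\nu\colon \widetilde V \to V$; since $V$ is reduced, $\nu_*[\widetilde V] = [V]$ and every intersection number of the form $\{[V]\}\cdot \xi$ computes as $\int_{\widetilde V}\nu^*\xi$). Denote by $\omega_V, \alpha_V, \beta_V$ the pullbacks of the given K\"ahler classes to $\widetilde V$; these are nef (and big) classes on the $(k+l)$-dimensional manifold $\widetilde V$. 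Then every term in~(\ref{eq resKT general}) becomes an intersection number on $\widetilde V$:
\[
(\omega_V^l \cdot \alpha_V^{k})(\omega_V^{k}\cdot \beta_V^l) \geq \frac{k!\,l!}{(k+l)!}\,(\omega_V^{k+l})(\alpha_V^{k}\cdot \beta_V^l).
\]
So the statement is exactly the ``reverse Khovanskii--Teissier'' inequality on $\widetilde V$, a manifold of dimension $m := k+l$, for the exponents $k$ and $l = m-k$.

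Next I would invoke the results from \cite{popovici2016sufficientbig, popovici15} (or \cite{nystrom2016duality} in the projective case), generalized to nef/pseudo-effective classes as in \cite[Section 3.4]{xiao2014movable}, which furnish precisely such inequalities. Concretely, the relevant statement is: on a compact K\"ahler $m$-fold, for nef classes $u, v$ and any nef $N$ one has $m(N\cdot u^{m-1})(u\cdot v^{m-1}) \geq \vol(u)(N\cdot v^{m-1})$, and more generally there are the ``mixed'' versions controlling $(u^j \cdot v^{m-j})$ for all $j$; these are exactly the inequalities that, after iterating or after applying the appropriate mixed form, yield the claimed bound with the combinatorial constant $\frac{k!\,l!}{(k+l)!}$. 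The cleanest route is to apply the inequality stated as \cite[inequality (63)]{popovici15} (which is the source already cited for Lemma~\ref{thm resKT}) with $X$ replaced by $\widetilde V$, $n$ replaced by $m=k+l$, and with the K\"ahler classes replaced by the nef-and-big classes $\omega_V, \alpha_V, \beta_V$; an approximation argument (perturb by $\varepsilon$ times a fixed K\"ahler class and let $\varepsilon\downarrow 0$) handles the fact that the restricted classes need not be strictly K\"ahler.

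The main obstacle is bookkeeping the exponents correctly and confirming that the combinatorial constant $\frac{k!\,l!}{(k+l)!}$ is indeed what comes out of the Popovici-type estimate in the mixed-degree regime: the cited inequalities are often stated for the extreme splitting $(1, m-1)$, and deducing the general $(k,l)$ split with the sharp binomial constant requires either a direct appeal to the stronger statements in \cite{popovici2016sufficientbig} (which do treat mixed degrees) or an induction that carefully tracks how the constant multiplies at each step. A secondary, more routine point is justifying the reduction through a resolution of singularities of $V$ and the compatibility of pushforward with the intersection pairing, together with the nef (rather than K\"ahler) approximation on $\widetilde V$; these are standard but should be stated. Once these are in place, Proposition~\ref{general reverseKT variety} follows immediately, since the displayed inequality on $\widetilde V$ is literally~(\ref{eq resKT general}) with $H^{n-k-l} = \{[V]\}$.
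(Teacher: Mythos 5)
Your opening reduction is exactly the right one and matches what the paper intends: replace $V$ by a resolution $\nu\colon\widetilde V\to V$, pull back $\omega,\alpha,\beta$ to nef and big classes on the $(k+l)$-dimensional manifold $\widetilde V$, perturb them to K\"ahler classes by adding $\varepsilon$ times a fixed K\"ahler class and let $\varepsilon\downarrow 0$; after this, the claim is the intersection-number inequality
$(\alpha_V^{k}\cdot\omega_V^{l})(\omega_V^{k}\cdot\beta_V^{l})\geq\frac{k!\,l!}{(k+l)!}\,(\omega_V^{k+l})(\alpha_V^{k}\cdot\beta_V^{l})$
on a compact K\"ahler manifold of dimension $m=k+l$. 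The gap is in how you then close this inequality. Neither Lemma~\ref{reverseKT} nor Lemma~\ref{thm resKT} (i.e.\ \cite[inequality (63)]{popovici15}) has the right shape: the former is the extreme $(1,m-1)$ splitting, and the latter controls a quantity of the form $(\omega\cdot\alpha^{m-2}\cdot\beta)$ with $\beta$ appearing to the first power, not $(\alpha^{k}\cdot\beta^{l})$ with a genuinely mixed exponent $l\geq 2$. Substituting $\widetilde V$ for $X$ in those statements therefore does not produce the target inequality, and it is not clear that iterating the $(1,m-1)$ case yields the sharp constant $\frac{k!\,l!}{(k+l)!}$. You flag this yourself as ``the main obstacle,'' but it is precisely the content of the proposition, so the proof is not complete as written.

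What the paper means by ``the method of \cite{popovici2016sufficientbig}'' is to rerun Popovici's Monge--Amp\`ere argument on $\widetilde V$ rather than to quote a stated inequality. Concretely: by Yau's theorem solve $\omega_\phi^{k+l}=c\,\alpha_V^{k}\wedge\beta_V^{l}$ on $\widetilde V$ with $c=(\omega_V^{k+l})/(\alpha_V^{k}\cdot\beta_V^{l})$, write the left-hand side of the target as a product of two integrals of densities against the volume form $\omega_\phi^{k+l}$, apply Cauchy--Schwarz, and conclude using the pointwise inequality
$(\alpha^{k}\wedge\omega_\phi^{l})(\omega_\phi^{k}\wedge\beta^{l})\geq\frac{k!\,l!}{(k+l)!}\,\omega_\phi^{k+l}\,(\alpha^{k}\wedge\beta^{l})$
for positive $(1,1)$-forms at a point of a $(k+l)$-dimensional space; the Monge--Amp\`ere equation makes the resulting integrand constant and the sharp constant drops out. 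This is exactly the computation the paper displays in its Section~\ref{secdisc} in the more general $(k+l)$-subharmonic setting. One caution if you take this route: the pointwise input must be the $H$-free linear-algebra inequality just quoted (available in \cite{popovici2016sufficientbig, Dem93}), not Proposition~\ref{pointwiseineq} of the paper, since that proposition is itself deduced from the statement you are proving and citing it here would be circular.
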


We observe that the pointwise case of (\ref{eq resKT general}) is true.

\begin{prop}\label{pointwiseineq}
Let $H, \omega, \alpha, \beta$ be smooth positive $(1,1)$ forms, then we always have
\begin{equation}\label{eq resKT general pointwise}
(H^{n-k-l}\wedge \alpha^{k}\wedge \omega^l)(H^{n-k-l}\wedge \omega^{k}\wedge \beta^l)\geq \frac{k!l!}{(k+l)!}(H^{n-k-l}\wedge \omega^{k+l})(H^{n-k-l}\wedge \alpha^{k}\wedge \beta^l).
\end{equation}
\end{prop}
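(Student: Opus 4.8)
The plan is to reduce the inequality to a statement at a single point of $X$, where everything becomes a question about positive-definite Hermitian forms on $\mathbb{C}^n$, and then to diagonalize. Fix a point $x\in X$ and work in the $n$-dimensional complex vector space $V=T_x^{1,0}X$. Both sides of (\ref{eq resKT general pointwise}) are ratios of wedge products of $(1,1)$-forms against the volume form $\tfrac{1}{n!}\omega^n$ (or any fixed positive $(n,n)$-form); writing $\binom{H^{n-k-l}\wedge\eta}{\text{vol}}$ for the corresponding coefficient, the claim is a purely algebraic inequality among the numbers obtained from the Hermitian forms $H,\omega,\alpha,\beta$ at $x$. Since $\omega$ is positive definite, I would first make a linear change of coordinates so that $\omega$ becomes the standard form $\sum_{j} i\,dz_j\wedge d\bar z_j$ at $x$; this does not change the inequality because both sides scale the same way under $GL(V)$-substitutions applied simultaneously to all four forms via pullback... wait, more carefully: only $\omega$ is normalized, and $H,\alpha,\beta$ transform accordingly, which is fine since the inequality is $GL(V)$-invariant in all four arguments jointly.

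The heart of the matter is the following: with $\omega$ standard, I want to simultaneously understand $H$ and the mixed terms. Here I would invoke the key structural fact that the inequality (\ref{eq resKT general}) is \emph{already known} when $H^{n-k-l}$ is replaced by the class of a subvariety (Proposition \ref{general reverseKT variety}), and more relevantly that the purely pointwise, ``$H=1$'' version --- i.e.\ $(\alpha^k\wedge\omega^l\wedge\Theta)(\omega^k\wedge\beta^l\wedge\Theta)\geq \tfrac{k!l!}{(k+l)!}(\omega^{k+l}\wedge\Theta)(\alpha^k\wedge\beta^l\wedge\Theta)$ for a fixed positive $(n-k-l,n-k-l)$-form $\Theta$ --- is exactly of the shape treated by Popovici's pointwise computations (\cite{popovici2016sufficientbig}, \cite[Section 7]{popovici15}); indeed this is the statement alluded to in the remark after (\ref{eq resKT general}) that ``$\omega$ is a $(k+l)$-subharmonic class''. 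So the route I would take is: apply that known pointwise inequality with the choice $\Theta = H^{n-k-l}$ at the point $x$. Since $H$ is a smooth positive $(1,1)$-form, $H^{n-k-l}$ is a smooth \emph{strictly} positive $(n-k-l,n-k-l)$-form, hence a legitimate choice of $\Theta$, and the inequality follows immediately at $x$; as $x$ was arbitrary we are done.

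If instead one wants a self-contained argument, the fallback plan is direct diagonalization: after normalizing $\omega=\sum i\,dz_j\wedge d\bar z_j$, by a further unitary change of coordinates (which preserves $\omega$) one may put $\alpha$ in diagonal form $\alpha=\sum a_j\,i\,dz_j\wedge d\bar z_j$ with $a_j>0$. Then $\alpha^k\wedge\omega^l\wedge\Theta$, $\omega^{k+l}\wedge\Theta$, etc., all become weighted sums over $(k+l)$-element subsets $I\subset\{1,\dots,n\}$ of the nonnegative numbers $\Theta_I:=(\Theta\wedge i\,dz_{I}\wedge d\bar z_I)/\text{vol}$, with weights that are elementary symmetric expressions in the $a_j$ for $\alpha$-terms and in the eigenvalues $b_j$ of $\beta$ (which need not be simultaneously diagonal, so one keeps $\beta^l\wedge(\cdots)$ as a genuine mixed term $\beta^l_{I^c}$). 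The inequality then reduces to a Cauchy--Schwarz / Newton-inequality estimate: writing $S=\sum_I \Theta_I$, $P=\sum_I \sigma_k(a_I)\,\Theta_I\,\omega^l$-factor etc., one applies Cauchy--Schwarz to the pairing $\sum_I (\sqrt{\Theta_I \text{-weight}})(\sqrt{\cdots})$ in the precise form that produces the constant $\tfrac{k!l!}{(k+l)!}=\binom{k+l}{k}^{-1}$ from the combinatorial identity $\sum_{J\subset I,|J|=k}1=\binom{k+l}{k}$. The main obstacle is bookkeeping the weights correctly so that the binomial constant comes out sharp --- this is the only delicate point, and it is precisely the computation carried out in \cite{popovici2016sufficientbig}, which is why invoking that reference is the efficient path.
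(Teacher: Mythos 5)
There is a genuine gap, and it sits exactly at the point you defer to the references. Your primary route is to invoke a ``known pointwise inequality'' of the form $(\alpha^k\wedge\omega^l\wedge\Theta)(\omega^k\wedge\beta^l\wedge\Theta)\geq \tfrac{k!l!}{(k+l)!}(\omega^{k+l}\wedge\Theta)(\alpha^k\wedge\beta^l\wedge\Theta)$ for an arbitrary positive $(n-k-l,n-k-l)$-form $\Theta$, and then set $\Theta=H^{n-k-l}$. But no such pointwise statement is established in \cite{popovici2016sufficientbig} or \cite{popovici15}: the inequalities there are \emph{global} integral inequalities on compact K\"ahler manifolds, proved by solving complex Monge--Amp\`ere equations (Yau's theorem) and applying Cauchy--Schwarz to the resulting integrals --- there is nothing to evaluate ``at the point $x$''. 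With $\Theta=H^{n-k-l}$ your cited statement \emph{is} Proposition \ref{pointwiseineq}, and with general $\Theta$ it is the strictly stronger open generalization raised in the remark following (\ref{eq resKT general}); either way the argument is circular. Your fallback diagonalization is also not a proof: you diagonalize $\alpha$ against $\omega$, concede that $\beta$ is not simultaneously diagonalizable, and then defer ``the only delicate point'' --- producing the sharp constant $\binom{k+l}{k}^{-1}$ --- to the same reference, which again contains a global, not a pointwise, computation. Mixed-discriminant inequalities of this type are genuinely nontrivial and do not follow from a routine Cauchy--Schwarz bookkeeping.

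The paper's proof goes in the opposite (and less obvious) direction: it reduces the \emph{pointwise} statement to the \emph{global} one. Since the inequality is pointwise one may assume all four forms have constant coefficients and are strictly positive; choosing coordinates so that $H=\sqrt{-1}\sum_j dz^j\wedge d\bar z^j$, the forms descend to cohomology classes on the abelian variety $A=\mathbb{C}^n/\mathbb{Z}[\sqrt{-1}]^n$, and the wedge-product coefficients become intersection numbers on $A$. Because $H$ then has integral periods it is (up to a constant) very ample, so $H^{n-k-l}$ is (up to a constant) the class of an irreducible subvariety of $A$, and the global Proposition \ref{general reverseKT variety} --- the only form of the reverse Khovanskii--Teissier inequality actually available from Popovici's method --- applies. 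This Demailly-style descent to a torus is the key idea your proposal is missing; without it, or without an honest self-contained linear-algebra argument, the proof does not close.
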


\begin{proof}
Following the argument of \cite[Section 5]{Dem93}, the result can be deduced directly from Proposition \ref{general reverseKT variety}.

More precisely, since (\ref{eq resKT general pointwise}) is a pointwise inequality, we just need to verify it for forms with constant coefficients.
Without loss of generality, we can assume that all the forms are strictly positive. In a suitable basis, we can assume that $H=\sqrt{-1}\sum_{j} dz^j \wedge d\bar z ^j$. Denote by $H, \omega, \alpha, \beta$ the associated cohomology classes on the abelian variety $A:=\mathbb{C}^n / \mathbb{Z}[\sqrt{-1}]^n$, then (\ref{eq resKT general pointwise}) is equivalent to the intersection number inequality (\ref{eq resKT general}) on $A$.
Since $H$ has integral periods, it is the class of a very ample divisor class (up to a constant), thus $H^{n-k-l}$ as a class on $A$ is the class (up to a constant) of an irreducible variety. Then the result follows from
Proposition \ref{general reverseKT variety}.
\end{proof}

\subsection{$(k+l)$-subharmonic class}
Inspired by \cite{popovici2016sufficientbig}, besides using complex Monge-Amp\`{e}re equations, it is natural to apply some other kind of equations to the above question. Actually, it can be generalized in the following way.

We assume $H$ is a K\"ahler metric, and assume $\omega, \alpha, \beta$ are $d$-closed $(k+l)$-subharmonic $(1,1)$ forms, that is, its eigenvalues $\lambda_1, ..., \lambda_n$ with respect to $H$ satisfy $\sigma_1 (\lambda), ..., \sigma_{k+l}(\lambda)>0$. Here, $\sigma_1, ..., \sigma_{k+l}$ are the first $(k+l)$ elementary symmetric functions. If a Bott-Chern $(1,1)$ class has a $(k+l)$-subharmonic smooth representative, then we call it a $(k+l)$-subharmonic class.

\begin{ques}\label{quesrevKT1}
Let $X$ be a compact K\"ahler manifold. Assume that $H$ is a K\"ahler class on $X$ and $\omega, \alpha, \beta$ are $(k+l)$-subharmonic class with respect to $H$, then do we have
\begin{equation}\label{eq resKT general1}
(H^{n-k-l}\cdot \alpha^{k}\cdot \omega^l)(H^{n-k-l}\cdot \omega^{k}\cdot \beta^l)\geq \frac{k!l!}{(k+l)!}(H^{n-k-l}\cdot \omega^{k+l})(H^{n-k-l}\cdot \alpha^{k}\cdot \beta^l)?
\end{equation}
\end{ques}

In the following, we use the same symbol to denote a form and its associated class.

By the result of \cite{dinew2012liouville} (see also \cite{sun2013class, szekelyhidi2015fully, zhang2015hessian} for Hermitian manifolds), we can always find a $(k+l)$-subharmonic (or ``$(k+l)$-positive'' by the terminology in \cite{szekelyhidi2015fully}) function $\phi$ satisfying
\begin{equation}\label{eqhess}
 H^{n-k-l}\wedge (\omega+i\partial \bar \partial \phi)^{k+l}= cH^{n-k-l}\wedge \alpha^{k}\wedge \beta^l,
\end{equation}
where $c=H^{n-k-l}\cdot \omega^{k+l}/H^{n-k-l}\cdot\alpha^{k}\cdot \beta^l$ is a constant.

Let $\omega_\phi := \omega+i\partial \bar \partial \phi$. Note that, since $\omega_\phi, \alpha, \beta$ are $(k+l)$-subharmonic, we have
$H^{n-k-l}\wedge \alpha^{k} \wedge \omega_\phi ^l>0$ and $H^{n-k-l}\wedge \omega_\phi ^{k}\wedge \beta^l>0$ (see e.g. \cite{blockihess}).
Denote the volume form $H^{n-k-l}\wedge \omega_\phi ^{k+l}$ by $\Phi$,  then
\begin{align*}
(H^{n-k-l}\cdot \alpha^{k}\cdot \omega^l)(H^{n-k-l}\cdot \omega^{k}\cdot \beta^l)&=\int\frac{H^{n-k-l}\wedge \alpha^{k} \wedge \omega_\phi ^l}{H^{n-k-l}\wedge \omega_\phi ^{k+l}} \Phi \int \frac{H^{n-k-l}\wedge \omega_\phi ^{k}\wedge \beta^l}{H^{n-k-l}\wedge \omega_\phi ^{k+l}}\Phi\\
&\geq \left(\int \left( \frac{H^{n-k-l}\wedge \alpha^{k} \wedge \omega_\phi ^l}{H^{n-k-l}\wedge \omega_\phi ^{k+l}}\cdot \frac{H^{n-k-l}\wedge \omega_\phi ^{k}\wedge \beta^l}{H^{n-k-l}\wedge \omega_\phi ^{k+l}}\right)^{1/2}\Phi \right)^2\\
&\geq^\dag \frac{k!l!}{(k+l)!}(H^{n-k-l}\cdot \omega^{k+l})(H^{n-k-l}\cdot \alpha^{k}\cdot \beta^l),
\end{align*}
where the last inequality ($\dag$) would follow provided a similar pointwise inequality as in Proposition \ref{pointwiseineq} holds for these $(k+l)$-subharmonic forms.

\begin{rmk}
By Proposition \ref{pointwiseineq}, if at almost every point the forms $\omega_\phi, \alpha, \beta$, considered as classes on a complex torus $A$, are K\"ahler classes when they are restricted to a general $(k+l)$-dimensional subvariety of $A$, then it is clear that we have the inequality.
However, for the general case, besides $(k+l)$-subharmonicity we are not clear if more positivity assumptions would be needed.
\end{rmk}


\bibliography{reference}
\bibliographystyle{amsalpha}

\noindent
\\
\noindent
\textsc{Department of Mathematics, Northwestern University, Evanston, IL 60208}\\
\noindent
\verb"Email: jianxiao@math.northwestern.edu"
\end{document}